\newcommand{\sn}{\sqrt{n}}
\newcommand{\tZ} {\tilde{Z}}
\newcommand{\tY} {\tilde{Y}}
\begin{document}
\title{A Queueing Model with Independent Arrivals, and its Fluid and Diffusion Limits}
\author{
Harsha Honnappa \and Rahul Jain \and Amy Ward
}
\institute{
Harsha Honnappa \at
Ming Hsieh Department of Electrical Engineering\\
University of Southern California\\
3740 McClintock Ave.\\
Los Angeles CA 90089\\
\email{honnappa@usc.edu}
\and
Rahul Jain \at
Ming Hsieh Department of Electrical Engineering \\
University of Southern California\\
3740 McClintock Ave.\\
Los Angeles CA 90089\\
\email{rahul.jain@usc.edu}
\and
Amy R. Ward \at
Marshall School of Business\\
University of Southern California\\
Bridge Hall, Trousdale Parkway\\
Los Angeles, CA 90089-0809\\
\email{amyward@marshall.usc.edu}
}
\date{\today}
\maketitle
\begin{abstract}
We study a queueing model with ordered arrivals, which can be called the $\D_{(i)}/GI/1$  queue. Here, customers from a fixed, finite, population independently sample a time to arrive from some given distribution $F$, and enter the queue in order of the sampled arrival times. Thus, the arrival times are order statistics, and the inter-arrival times are differences of consecutive ordered statistics. They are served by a single server with independent and identically distributed service times, with general service distribution $G$. The discrete event model is analytically intractable. Thus, we develop fluid and diffusion limits for the performance metrics of the queue. The fluid limit of the queue length is observed to be a reflection of a `fluid netput' process, while the diffusion limit is observed to be a function of a Brownian motion and a Brownian bridge process or `diffusion netput' process. The diffusion limit can be seen as being reflected through the directional derivative of the Skorokhod regulator of the fluid netput process in the direction of the diffusion netput process. We also observe what may be interpreted as a sample path Little's law. Sample path analysis reveals various operating regimes where the diffusion limit switches between a free diffusion, a reflected diffusion process and the zero process, with possible discontinuities during regime switches. The weak convergence results are established in the $M_1$ topology.
\keywords{Queueing models\and transient queueing systems \and fluid and diffusion limits \and distributional approximations \and directional derivatives \and $M_1$ topology}
\end{abstract}

\section{Introduction} \label{sec:intro}

Most of modern queueing theory is concerned with scenarios where arrival and service processes are stationary and ergodic. That the arrival process is a renewal process with i.i.d. inter-arrival times is a common modeling assumption. This is mathematically convenient as it allows full use of the tools that renewal theory and ergodic theory provide. However, it is not true in some queueing scenarios. For example, in some queueing scenarios, each arriving customer takes an independent decision of when to arrive. When we assume that every arriving customer draws an arrival time from the same distribution, this does not lead to a renewal arrival process. Moreover, such a distribution may only have finite support meaning that the system is transient. This scenario does not fit the standard, single-server models in queueing theory such as $M/M/1$, $M/G/1$, etc.


There has been an interest in developing a theory for non-stationary queues \cite{Ne68,Ma81,Ke82,Ma85,Ha92,MaMa95}. 
However, in almost all of these models the assumption of a non-homogeneous Poisson arrival and service process remains ubiquitous. Recent work in \cite{LiWh2012,LiWh2012b} relaxes these assumptions. However, all these models assume a queueing system that operates forever with an infinite population of customers and (possibly) a steady state (when arrival and service rates are cyclostationary). 

In contrast, many queueing systems serve only a finite number of customers, the queueing system itself may operate only in a finite window of time, or a modeler is interested only in the transient behavior of the system.
Scenarios where such behavior is apparent include queueing outside stores before new product launches, DMV or postal offices, lunch cafeterias etc., some call centers where customers take independent decisions of when to call and service time is finite (8am-5pm, for example), and even emergency departments of hospitals, where day-of-week effects strongly indicate that a manager would want to study the queueing dynamics on a single day. In communication networks, single file transfers such as a video streaming session and packet transmissions over a fixed interval of interest are examples of systems where a modeler may wish to study transient delay distributions. 


In this paper, we study a \textit{transitory} queueing model for such systems. 
Consider $n$ customers who arrive into a single-server queue. Each customer's time of arrival is modeled as an i.i.d. sample from a distribution $F$ (restrictions on $F$ will be stated later), and customers enter the queue in order of the sampled times. Service times are i.i.d. with distribution $G$.  If $X_{(i)}$ is the $i$th order statistic from a sample of size $n$ drawn from $F$ and $\Delta_{(i)}:=(X_{(i)}-X_{(i-1)})$ then, in Kendall's notation, this model can be called the $\Delta_{(i)}/GI/1$ \textit{queueing model}.

The analysis of the discrete event model is quite difficult, in general. For instance, when the service process is Poisson, the Kolmogorov forward equations for the joint distribution of the queue length and cumulative arrival processes can be written down, but there is no easy way to obtain analytical solutions. In this paper, we develop fluid and diffusion approximations to the queue-length process directly as the population size scales to infinity and the service rate is \textit{accelerated} appropriately (to be defined). We also establish a sample path Little's Law that links the limit queue-length and virtual waiting time processes under both fluid and diffusion limits.

To develop the fluid limits, we use the Glivenko-Cantelli theorem and the functional Strong Law of Large Numbers for renewal processes along with the Skorokhod reflection mapping theorem.  We show that the fluid limit of the queue length process switches between `overloaded', `underloaded', and `critically loaded' regimes as time progresses.  The limiting diffusion for the queue-length process is derived using a directional derivative reflection mapping lemma.  The diffusion process approximation is a reflection of a Brownian bridge process, that arises from the invariance principle related to the Kolmogorov-Smirnov statistic, combined with a Brownian motion, that arises from the functional central limit theorem for renewal processes.
We also note that our diffusion process convergence results are in Skorokhod's $M_1$ topology on $\mathcal{D}_{\lim}[0,\infty)$, the space of functions that are right or left continuous at every point, and right continuous at 0. 

The rest of this paper is organized as follows. We start with a brief review of the existing literature related to this model. Section \ref{sec:prelim} presents the $\Delta_{(i)}/GI/1$ queueing model and some basic results about fluid and diffusion approximations to arrival and service processes. Section \ref{sec:fluid} develops fluid approximations to the queue length, busy-time and virtual waiting time processes.  In Section \ref{sec:diffusion}, we develop diffusion approximations to these processes. Section \ref{sec:workload-diffusion} develops waiting time approximations, as well as a sample path Little's law. Section \ref{sec:paths} takes a closer look at the sample paths of the queue length process in various operating regimes. Section \ref{sec:sims} presents some examples and simulations of queue length process. We then conclude in Section \ref{sec:conclusion} with some remarks about potential future directions.  In the appendix, we place proofs that are more technical in nature.

\subsection{Related Literature}

The \emph{form} of the diffusion and fluid approximations to the $\D_{(i)}/GI/1$ queue parallel that of the well studied $M_t/M_t/1$ model in the sense that (1) the fluid limit may switch between overloaded, underloaded, and critically loaded periods, and (2) the diffusion limit arises using a directional derivative for the Skorokhod reflection map. Approximations for the latter model were developed in \cite{MaMa95}, wherein the Poisson arrival and service processes are approximated sample path-wise by Gaussian processes on an accelerated time scale, by leveraging strong approximation results for L$\acute{e}$vy processes. We, instead, prove a weak convergence by utilizing the Skorokhod almost sure representation theorem to establish the desired results. Another important difference is that our fluid and diffusion limits depend on empirical process theory (i.e., the Glivenko-Cantelli and Kolmogorov-Smirnov theorems), whereas such results are not relevant in~\cite{MaMa95}.

There have been earlier attempts to understand `transitory' behavior in queueing systems.  In the late 1960s \cite{Ne68} (also \cite{Ne82}), Newell introduced queueing models with both time-varying arrival and service processes. He studied the Fokker-Planck (or heat) equation for the Gaussian process approximation to a general arrival process in various special cases on the arrival rate function. However, these approximations were not rigorously justified with a weak convergence result. In \cite{GaLePe75}, the authors discuss several transitory demand queueing problems and propose a model similar to a $\Delta_{(i)}/M/1$ queue. In \cite{Lo94}, the author considers a similar model to the $\D_{(i)}/GI/1$ queue. The analysis focuses on the local behavior of the queue, similar to the analyses of Newell \cite{Ne68}. The author only establishes local weak convergence to Gaussian processes at continuity points of the limit process. Our results, on the other hand, establish a single ``process-level'' convergence result over all time and, indeed, this is the main difficulty in the analysis. 

\section{Preliminaries} \label{sec:prelim}
\noindent \textbf{Notations.}~~~Unless noted otherwise, all intervals of time are subsets of $[-T_0,\infty)$, for a given $-T_0 \leq 0$ (where $-T_0$ represents the time the first instant a user can arrive; without loss of generality we assume that service starts at $0$). Let $\sD_{\lim} := \sD_{\lim}[-T_0,\infty)$ be the space of functions $x : [-T_0,\infty) \rightarrow \mathbb{R}$ that are right-continuous at $-T_0$, and are either right or left continuous at every point $t > -T_0$. Note that this differs from the usual definition of the space $\sD$ as the space of functions that are right continuous with left limits (cadl\'{a}g functions). We denote almost sure convergence by $\stackrel{a.s.}{\longrightarrow}$ and weak convergence by $\Rightarrow$. $(S,m)$ represents the metric space and metric of convergence. 
Thus, $X_n \stackrel{a.s.}{\longrightarrow} X$ in $(\sD_{\lim},J_1)$ as $n \rightarrow \infty$ indicates that $X_n \in \sD_{\lim}$ converges to $X \in \sD_{\lim}$ in the (strong) $J_1$ topology almost surely. Similarly, $X_n \Rightarrow X$ in $(\sD_{\lim},J_1)$ as $n \rightarrow \infty$ indicates that $X_n \in \sD_{\lim}$ converges weakly to $X \in \sD_{\lim}$ in the (strong) $J_1$ topology. $(\sD_{\lim},M_1)$ indicates that the topology of convergence is the $M_1$ topology. When convergence is joint for a collection of random variables we will either be working with \emph{strong} $M_1$ ($SM_1$) topology or the \emph{weak} $J_1$ ($WJ_1$) topology on the product space of the sample paths (see \cite{Wh01} for formal definitions of these spaces). $\bar{X}$ indicates a fluid-scaled or fluid limit process. $\hat{X}$ and $\tilde{X}$ are used to indicate diffusion-scaled and diffusion limit processes. We use $\circ$ to denote the composition of functions or processes. The indicator function is denoted by $\mathbf{1}_{ \{ \cdot \} }$ and the positive part operator by $(\cdot)_+$. 
\subsection{The Queueing Model} \label{sec:model}
Consider a single server, infinite buffer queue that is non-preemptive, non-idling, and starts empty. Service follows a first-come-first-served (FCFS) schedule. Let $n$ be the customer population size. Customers independently sample an arrival time $T_i$, $i = 1,\ldots,n$, from a common distribution function $F$ assumed to have support $[-T_0,T] \subset \mathbb{R}$, where $T > 0$. For simplicity, we assume that $F$ is absolutely continuous with a continuous density function. The customer entry times are the order statistics $T_{(1)} \leq T_{(2)} \leq \ldots \leq T_{(n)}$ of the sampled arrival times. The arrival process is the cumulative number of customers that have arrived by time $t$:
\begin{equation} \label{def:arrival-process}
A(t) := \sum_{i=1}^n \mathbf{1}_{\{T_i \leq t\}},
\end{equation}
where $\mathbf 1_{\{ \cdot\}}$ represents an indicator function. 

Let $\{\nu_i, i \geq 1\}$ be a sequence of independent and identically distributed (IID) random variables,
where $\nu_i$ represents the service time of the $i$th customer.  Assume that the mean service time $\bbE\nu_i = 1/\mu < \infty$ and the variance of the service times $\mbox{Var}(\nu_i) < \infty$, and that the associated CDF $G$ has support $[0,\infty)$. Finally, also assume that the sequence is independent of the arrival times $T_i$, $i=1,\ldots,n$. Thus, service starts at time $t = 0$. Let $S$ be the service process, defined as a renewal counting process, so that
\begin{equation} \label{def:service-process}
S(t) := \begin{cases} 
0 & \forall t \in [-T_0,0), \\
\sup \{m \geq 1 | V(m) \leq t\}, &~\forall t \geq 0,
\end{cases}
\end{equation}
where 
\[
V(m) := \sum_{i=1}^m \nu_i
\]
is the cumulative load from $m$ jobs. Let $V(t) := \sum_{i=1}^{\lfloor t \rfloor} \nu_i$ be the offered load process.

The amount of time a customer arriving at time $t$ has to wait for service, is
\begin{equation} \label{def:workload-process}
Z(t) := V(A(t)) - B(t) - t \mathbf{1}_{\{t \leq 0\}},
\end{equation}
where 
\begin{equation} \label{def:busy-time}
B(t) := \bigg( \int_{0}^t \mathbf{1}_{\{Q(s) > 0\}} ds \bigg) \mathbf{1}_{\{t \geq 0\}}, ~\forall t \in [-T_0,\infty)
\end{equation}
is the \textit{busy time} process. 
Note that this definition of the virtual waiting time varies slightly from the standard definition due to the fact that an arrival at time $t < 0$ before service starts has to wait an extra $t$ units of time for service to start, which accounts for the $-t \mathbf{1}_{\{t \leq 0\}}$ term.

Let $Q$ represent the queue length process, including both any customer in service and all waiting customers.  This is defined in terms of the arrival and service processes as
\begin{equation} \label{def:queue-length}
Q(t) := A(t) - S(B(t)), \quad \forall t \in [-T_0,\infty),
\end{equation}
where $B(t)$ is the busy time process. 

Finally, the idle time process of the server is
\begin{equation}
\label{idle}
I(t) := t \mathbf{1}_{\{t \geq 0\}} - B(t) =  \bigg (\int_{0}^t \mathbf{1}_{\{Q(s) = 0\}} ds \bigg )
\mathbf{1}_{\{ t \geq 0 \}} \quad \forall t \in [-T_0,\infty).
\end{equation}


\subsection{Basic results}\label{sec:basic}
We now present known functional strong law of large numbers (FSLLN) and functional central limit theorem (FCLT)
or diffusion limits, for the arrival and service processes, as the population size $n$ increases to $\infty$. 

Let $A^n := A$ be the arrival process associated with the system having population size $n$. The fluid-scaled arrival process is
\(
\bar{A}^n := \frac{A^n}{n}.
\)
Next, consider an \textit{accelerated} service process, where the service times (or, equivalently, the service rate) are scaled by the population size $n$, so that 
\[
S^n(t) := \begin{cases}
0 & \forall t \in [-T_0,0),\\
\sup \bigg\{ m \geq 1 | \sum_{i=1}^m \frac{\nu_i}{n} \leq t \bigg \}, &~\forall t \geq 0.
\end{cases}
\]
The fluid-scaled service process is
\(
\bar{S}^n := \frac{S^n}{n}.
\)
Also, the fluid-scaled offered load process is
\begin{equation} \label{offered-work}
\bar{V}^n(t) := \begin{cases}
0 & \forall t \in [-T_0,0),\\
\sum_{i=1}^{\lfloor nt \rfloor} \nu^n_i, & \forall t \in [0,\infty).
\end{cases}
\end{equation}
Note that our assumption that ${\nu_i, i \geq 1}$ is an i.i.d. sequence implies that $S^n(t)$ is equivalent to the time-scaled process $S(nt)$ (where $n$ is an arbitrary parameter that increases to infinity) used in the conventional heavy-traffic setting. Acceleration, however, provides a nice interpretation to our scaling that we conjecture can potentially be extended to non-i.i.d. settings. Now, the following proposition establishes the fluid limits for these processes.

\begin{proposition} \label{prop:fluid-1}
~As $n \rightarrow \infty$,
\begin{eqnarray} \label{lim:fluid-ASV}
(\bar{A}^n(t),\bar{S}^n(t) \mathbf{1}_{t \geq 0},\bar{V}^n(t) \mathbf{1}_{t \geq 0})
\stackrel{a.s.}{\longrightarrow} (F(t), \mu t \mathbf{1}_{\{t \geq 0\}},
\frac{t}{\mu} \mathbf{1}_{\{t \geq 0\}}) \text{ in } (\sD^3_{\lim}, WJ_1),
\end{eqnarray}
where $\sD_{\lim}^3$ is the three dimensional product space of sample paths.
\end{proposition}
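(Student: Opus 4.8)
The plan is to prove each of the three marginal almost-sure fluid limits separately and then assemble them, using the elementary fact that almost-sure convergence of finitely many real-valued coordinate processes yields almost-sure convergence of the joint vector: one simply intersects the three probability-one events on which the coordinates converge, obtaining a probability-one event on which the whole vector converges u.o.c. Because the limits here are deterministic, no analysis of joint distributions is required, in contrast to what would be needed for a weak-convergence statement.

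For the arrival process, I would first observe that, since $T_1,\dots,T_n$ are i.i.d.\ with law $F$, the fluid-scaled process $\bar{A}^n(t)=\frac{1}{n}\sum_{i=1}^n \mathbf{1}_{\{T_i\le t\}}$ is precisely the empirical distribution function of the sample. The Glivenko--Cantelli theorem then gives $\sup_{t}\,|\bar{A}^n(t)-F(t)|\stackrel{a.s.}{\longrightarrow}0$, i.e.\ uniform convergence on all of $[-T_0,\infty)$, which in particular is u.o.c. Crucially, this holds for an arbitrary $F$, so any atoms of $F$ present no difficulty, even though the limit $F$ is then merely an element of $\sD_{\lim}$ and not continuous.

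For the service and offered-work processes I would reduce everything to the underlying renewal structure. Since $\sum_{i=1}^m \nu_i/n\le t$ if and only if $V(m)\le nt$, we have $\bar{S}^n(t)=\frac{1}{n}S(nt)$, the fluid-scaled renewal counting process; the functional strong law of large numbers for renewal processes then gives $\frac{1}{n}S(nt)\stackrel{a.s.}{\longrightarrow}\mu t$ u.o.c.\ on $[0,\infty)$. Similarly, writing $\bar{V}^n(t)=\frac{\lfloor nt\rfloor}{n}\cdot\frac{1}{\lfloor nt\rfloor}\sum_{i=1}^{\lfloor nt\rfloor}\nu_i$ and invoking the strong law for the i.i.d.\ sequence $\{\nu_i\}$ yields the pointwise limit $t/\mu$. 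The indicators $\mathbf{1}_{t\ge 0}$ reduce both statements to $t\ge 0$, with the convergence being trivial ($0\to 0$) on $[-T_0,0)$.

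I expect the one point that genuinely requires care to be the upgrade from pointwise to uniform-on-compacts convergence for these two monotone processes: the strong law supplies convergence only at each fixed $t$ (or on a countable dense set), whereas the claim is u.o.c. This is handled by the standard observation that the prelimit processes $\frac{1}{n}S(n\cdot)$ and $\bar{V}^n$ are nondecreasing while the limits $\mu t$ and $t/\mu$ are continuous, so pointwise convergence on the rationals together with monotonicity forces uniform convergence on compacts (a Dini/P\'olya-type argument). With the three coordinate limits established, intersecting the underlying almost-sure events delivers the joint statement in $(\sD_{\lim},U)$.
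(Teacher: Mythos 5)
Your proposal is correct and follows essentially the same route as the paper, which simply invokes the Glivenko--Cantelli theorem for $\bar{A}^n$ and the functional strong law of large numbers for renewal processes for $\bar{S}^n$ and $\bar{V}^n$. The extra details you supply (intersecting the three almost-sure events for joint convergence, and the monotonicity-plus-continuous-limit argument upgrading pointwise to u.o.c.\ convergence) are exactly the standard steps the paper leaves implicit.
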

\noindent \textbf{Remarks.} 1. The proof of Proposition  \ref{prop:fluid-1} follows easily from standard results and we omit it. The fluid arrival process limit is given by the Glivenko-Cantelli Theorem (see \cite{Du10}). The fluid limits of the service process and the offered load process follow from the functional strong law of large numbers for renewal processes (see \cite{ChYa01}). Joint convergence is a consequence of the independence assumptions between the service times and arrival times.\vspace{10 pt}

Next, looking at the errors of the fluid-scaled arrival process around the fluid limit, the diffusion-scaled arrival process is 
\[
\hat{A}^n(t) := \sn \bigg( \bar{A}^n(t) - F(t) \bigg) \quad \forall t \in [-T_0,\infty).
\]
Similarly, the diffusion-scaled service and offered load processes are 
\begin{eqnarray*}
\hat{S}^n(t) &:=& \sn \bigg( \bar{S}^n(t) - \mu t\bigg), \quad t \geq 0 \\
\hat{V}^n(t) &:=&  \sn \bigg(  \bar{V}^n(t) - \frac{1}{\mu}t \bigg), \quad t \geq 0.
\end{eqnarray*}
The following proposition presents the diffusion limits for these processes.

\begin{proposition}
\label{prop:diffusion-1}
~As $n \rightarrow \infty$,
\begin{eqnarray}
\label{lim:diffusion-ASV}
(\hat{A}^n,\hat{S}^n,\hat{V}^n) \Rightarrow \bigg( W^0 \circ F, \sigma
\mu^{3/2} W \circ e, -\sigma \mu^{1/2} W \circ \frac{e}{\mu} \bigg) \text{ in } (\sD_{\lim}^3,WJ_1),
\end{eqnarray}
where $W^{0}$ is the standard Brownian bridge process and $W$ is the standard Brownian motion process, both are mutually independent, and $e: [0,\infty) \rightarrow [0,\infty)$ is the identity map.
\end{proposition}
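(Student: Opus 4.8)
The plan is to treat the three coordinates separately, identify each limit from a classical invariance principle, and then stitch them together using the independence of arrivals and services together with the inverse relationship between the service counting process and its partial-sum process. First, for the arrival coordinate, observe that $\bar{A}^n$ is precisely the empirical distribution function of the i.i.d. sample $T_1,\dots,T_n$ drawn from $F$, so that $\hat{A}^n = \sn(\bar{A}^n - F)$ is exactly the empirical process. The functional central limit theorem for empirical processes (the invariance principle underlying the Kolmogorov--Smirnov statistic) yields $\hat{A}^n \Rightarrow W^0 \circ F$, with $W^0$ a standard Brownian bridge on $[0,1]$; see \cite{Du10}. Provided $F$ is continuous (as is natural for a sampling distribution of arrival times), the limit $W^0 \circ F$ has continuous sample paths.

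Next, for the offered-work coordinate, note that $\bar{V}^n(t) = \frac{1}{n}\sum_{i=1}^{\lfloor nt\rfloor}\nu_i$, so that $\hat{V}^n(t) = n^{-1/2}\sum_{i=1}^{\lfloor nt\rfloor}(\nu_i - 1/\mu) + o(1)$ uniformly on compact sets, the $o(1)$ term coming from the floor and being bounded by $n^{-1/2}/\mu$. Donsker's theorem for partial sums of i.i.d. variables with mean $1/\mu$ and variance $\sigma^2$ then gives weak convergence to a Brownian motion with variance parameter $\sigma^2$, which I record in the time-changed, sign-reversed form $-\sigma\mu^{1/2}\,W\circ (e/\mu)$; one checks that this process has the correct variance $\sigma^2 t$ at time $t$. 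For the service coordinate, $\hat{S}^n(t) = n^{-1/2}(S(nt) - \mu n t)$ is the centered, scaled renewal counting process, and the FCLT for renewal processes gives $\hat{S}^n \Rightarrow \sigma\mu^{3/2}\,W\circ e$; see \cite{ChYa01}.

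The first genuinely delicate point is that the \emph{same} Brownian motion $W$ must drive both $\hat{S}^n$ and $\hat{V}^n$. This is forced by the fact that $S$ is the generalized inverse of $V$: using $\{S(t)\ge m\}=\{V(m)\le t\}$, the counting-process fluctuations and the partial-sum fluctuations are asymptotically linked through the inverse-function (random-time-change) continuous mapping, which in the limit reads $\hat{S}(t) = -\mu\,\hat{V}(\mu t)$. Substituting $\hat{V} = -\sigma\mu^{1/2}\,W\circ(e/\mu)$ reproduces $\hat{S} = \sigma\mu^{3/2}\,W\circ e$ with the identical $W$, so the pair $(\hat{S}^n,\hat{V}^n)$ converges \emph{jointly} to $(\sigma\mu^{3/2}\,W\circ e,\; -\sigma\mu^{1/2}\,W\circ(e/\mu))$. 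I expect this coupling, rather than any single marginal, to be the main obstacle, and I would carry it out via the inverse-mapping theorem (e.g. the development in \cite{ChYa01}) after first establishing joint tightness of the pair.

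Finally, I would assemble the joint limit. Because the arrival times $\{T_i\}$ are independent of the service times $\{\nu_i\}$, for every $n$ the empirical-process coordinate $\hat{A}^n$ is independent of the pair $(\hat{S}^n,\hat{V}^n)$. Tightness of each block yields tightness of the triple, and the product form of the finite-dimensional laws passes to the limit, so that $(\hat{A}^n,\hat{S}^n,\hat{V}^n)$ converges jointly to the stated triple with $W^0$ and $W$ independent. Since every limit coordinate has continuous sample paths, the $J_1$ convergence delivered by the classical theorems is equivalent to uniform convergence on compact sets for the continuous limit, which upgrades the convergence to $(\sD_{\lim},U)$, as claimed.
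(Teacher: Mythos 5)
Your proposal is correct and follows essentially the same route as the paper, which simply invokes Donsker's theorem for the empirical process and the FCLT for renewal/partial-sum processes (the paper's "proof" is a remark citing these standard results). The only difference is that you spell out the joint convergence — the shared Brownian motion for $(\hat{S}^n,\hat{V}^n)$ via the inverse relation and the independence of the arrival block — details the paper leaves implicit; these are handled correctly.
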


\noindent \textbf{Remarks.} 1. The proof of this proposition follows easily from standard results: The FCLT limit for the  diffusion-scaled arrival process, also called the empirical process, is a Brownian bridge by Donsker's Theorem (see Sections 13 and 16 in \cite{Bi68}). Note that this limit also arises in the study of the invariance principle associated with the Kolmogorov-Smirnov statistic used to compare empirical distributions with candidate ones (see \cite{Wh01} for more detail). The limits for the diffusion-scaled service and offered work processes follow from the FCLT for renewal processes (see Section 16 in \cite{Bi68} and Chapter 5 in \cite{ChYa01}). Joint convergence follows from independence.\vspace{3 pt}

\noindent 2. Our assumption that the support of $F$ is compact is largely for technical reasons; viz., the Skorokhod topologies restrict weak convergence to compact intervals of the domain $[-T_0,\infty)$. Proving a diffusion approximation that holds for distributions with infinite support would require strong approximation results, and is beyond the scope of the current paper.

\section{Fluid Approximations} \label{sec:fluid}

Following \eqref{def:queue-length} the fluid-scaled queue length process is
\begin{equation} \label{queue-length-fluid-scaled}
\frac{Q^n(t)}{n} \,=\, \frac{1}{n} A^n(t) - \frac{1}{n} S^n(B^n(t)),
\end{equation}
where $B^n(t)$ is the fluid-scaled version of the busy time process \eqref{def:busy-time} defined as
\[
B^n(t) \,:=\, \bigg ( \int_{0}^t \mathbf{1}_{\{Q^n(s) > 0\}} ds \bigg )
\mathbf{1}_{\{ t \geq 0 \}}.
\]
Next, add and subtract the functions $F(t)$, $\mu t \mathbf{1}_{\{t \geq 0\}}$ and $\mu B^n(t)$ to obtain
\begin{equation*}
\frac{Q^n(t)}{n} := \bigg( \frac{A^n(t)}{n} - F(t) \bigg) - \bigg(\frac{S^n(B^n(t)}{n} -
\mu B^n(t) \bigg) + \bigg( F(t) - \mu t \mathbf{1}_{\{t \geq 0\}} \bigg) + \mu I^n(t),
\end{equation*}
where 
\(
I^n(t)=t\mathbf{1}_{\{t \geq 0\}}-B^n(t)
 \)
is the fluid-scaled idle time process. Thus, \eqref{queue-length-fluid-scaled} is equivalently
\begin{equation}
\label{queue-length-bar}
\overline{Q}^n(t) := \frac{Q^n(t)}{n} \,=\, \bar{X}^n(t) + \mu I^n(t), \quad \forall t \in
[-T_0,\infty),
\end{equation}
where $\bar{X}^n(t)$ is 
\begin{equation}
\label{X-bar-n}
\bar{X}^n(t) := \bigg( \frac{A^n(t)}{n} - F(t) \bigg) - \bigg( \frac{S^n(B^n(t))}{n}
- \mu B^n(t) \bigg) + (F(t) -
\mu t \mathbf{1}_{\{t \geq 0\}}).
\end{equation}

In preparation for the main Theorem in this section, recall that the Skorokhod reflection map is a continuous functional $(\Phi, \Psi) : \sD_{\lim} \to \sD_{\lim} \times \sD_{\lim}$ defined as
\(
x \mapsto \Psi(x) := \sup_{-T_0 \leq s \leq t} (-x(s))_+,
\)
and
\(
x \mapsto \Phi(x) := x + \Psi(x), \quad \forall x \in \sD_{\lim}.
\)
The continuity of the map with respect to the uniform topology on $\sD_{\lim}$ follows from Theorem 3.1 in \cite{MaRa10}. 

\begin{theorem} [Fluid Limit]
\label{thm:queue-length-fluid}
~The pair $(\bar{Q}^n, \mu I^n)$ has a unique representation $(\Phi(\bar{X}^n),\Psi(\bar{X}^n))$ in terms of $\bar{X}^n$. Furthermore, as $n \rightarrow \infty$,
\[
(\bar{Q}^n, \mu I^n) \stackrel{a.s.}{\longrightarrow} (\Phi(\bar{X}),
\Psi(\bar{X}))~ \text{ in } (\sD_{\lim}\times \sD_{\lim},WJ_1),
\]
where $\bar{X}(t) = (F(t) - \mu t \mathbf{1}_{\{t \geq 0\}})$.
\end{theorem}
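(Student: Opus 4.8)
The plan is to prove the statement in two stages: first establish the pathwise Skorokhod representation $(\bar{Q}^n, \mu I^n) = (\Phi(\bar{X}^n), \Psi(\bar{X}^n))$ for each fixed $n$, and then pass to the limit by showing $\bar{X}^n \stackrel{a.s.}{\longrightarrow} \bar{X}$ uniformly on compacts and invoking the continuity of the reflection map $(\Phi,\Psi)$.

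For the representation, I would verify that for each fixed $n$ the pair $(\bar{Q}^n, \mu I^n)$ solves the one-dimensional Skorokhod problem for the input $\bar{X}^n$, and then appeal to uniqueness of its solution. The decomposition \eqref{queue-length-bar} already supplies $\bar{Q}^n = \bar{X}^n + \mu I^n$, so it remains to check three properties: (i) $\bar{Q}^n(t) \geq 0$ for all $t$, which is immediate since $Q^n$ is a queue length; (ii) $\mu I^n$ is nondecreasing with $\mu I^n(t) = 0$ for $t \leq 0$, which follows from $I^n(t) = \int_0^t \mathbf{1}_{\{Q^n(s)=0\}}\,ds$ in \eqref{idle}, an integral of a nonnegative integrand that vanishes before service begins; and (iii) the complementarity condition $\int_{-T_0}^\infty \bar{Q}^n(s)\,d(\mu I^n)(s) = 0$, which holds because the Stieltjes measure $dI^n$ is carried by the set $\{s : Q^n(s) = 0\} = \{s : \bar{Q}^n(s) = 0\}$. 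Given (i)--(iii), uniqueness of the Skorokhod map forces $(\bar{Q}^n, \mu I^n) = (\Phi(\bar{X}^n), \Psi(\bar{X}^n))$; note that the left-endpoint condition is consistent since $\bar{X}^n(-T_0) = \bar{A}^n(-T_0) \geq 0$ makes $\Psi(\bar{X}^n)(-T_0)=0$.

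For the convergence, I would first show $\bar{X}^n \stackrel{a.s.}{\longrightarrow} \bar{X}$ in $(\sD_{\lim},U)$ using the decomposition in \eqref{X-bar-n}. The centered arrival term $\bar{A}^n - F$ converges to $0$ u.o.c. almost surely by Proposition \ref{prop:fluid-1}. The remaining nontrivial piece is the centered service composition $\bar{S}^n(B^n(\cdot)) - \mu B^n(\cdot)$. Here I would use the a priori bound $0 \leq B^n(t) \leq t$ for $t \geq 0$ (since $B^n(t) = \int_0^t \mathbf{1}_{\{Q^n(s)>0\}}\,ds \leq t$), so that for $t$ in a compact set $[0,M]$ the argument $B^n(t)$ stays in $[0,M]$. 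Proposition \ref{prop:fluid-1} gives $\sup_{s \in [0,M]} |\bar{S}^n(s) - \mu s| \to 0$ almost surely, and evaluating at $s = B^n(t)$ yields $\sup_{t \in [0,M]} |\bar{S}^n(B^n(t)) - \mu B^n(t)| \to 0$ almost surely, uniformly over the random choice of $B^n$. Since the last summand of \eqref{X-bar-n} is exactly $\bar{X}$, these estimates give $\bar{X}^n \stackrel{a.s.}{\longrightarrow} \bar{X}$ u.o.c. Finally, since $(\Phi,\Psi)$ is continuous in the uniform topology on $\sD_{\lim}$ (Theorem 3.1 in \cite{MaRa10}), the continuous mapping theorem applied pathwise yields $(\Phi(\bar{X}^n),\Psi(\bar{X}^n)) \stackrel{a.s.}{\longrightarrow} (\Phi(\bar{X}),\Psi(\bar{X}))$ in $(\sD_{\lim},U)$, which by the representation is the claimed limit.

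The main obstacle is the treatment of the composed service term $\bar{S}^n \circ B^n$: the process $B^n$ is not converging to a pre-identified limit but is defined implicitly through $Q^n$, so one cannot simply invoke a composition-continuity lemma that presupposes convergence of the inner process. The resolution is that we do not need $B^n$ to converge at all --- the deterministic bound $B^n(t) \leq t$ confines its range to a compact set, and the strength of the functional strong law of large numbers for the service process is precisely \emph{uniform} convergence over that set, which lets us absorb the arbitrary argument $B^n(t)$. One must additionally confirm that the convergence is genuinely uniform on compacts and that all compositions remain in $\sD_{\lim}$; these are routine given that $B^n$ is continuous, nondecreasing, and $1$-Lipschitz.
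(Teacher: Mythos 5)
Your proposal is correct and follows essentially the same route as the paper: verify the Skorokhod problem conditions (nonnegativity, nondecreasing regulator, complementarity) to get the representation $(\Phi(\bar{X}^n),\Psi(\bar{X}^n))$, show $\bar{X}^n \to \bar{X}$ u.o.c.\ almost surely using the bound $B^n(t) \le t$ together with Proposition \ref{prop:fluid-1} to control the composed service term, and conclude by continuity of the reflection map. Your write-up simply spells out in more detail the composition step that the paper states in one line.
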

\Proof
First note that $\bar{Q}^n(t) \geq 0, ~ \forall t \in  [-T_0,\infty)$.  It is also true that $I^n(-T_0)  = 0$ and $d I^n(t) \geq 0, ~ \forall t \in [-T_0,\infty)$. By definition of $I^n(t)$, it follows that  $\int_{-T_0}^{\infty} \bar{Q}^n(t) d I^n(t) = 0$. Thus, by  the Skorokhod reflection mapping theorem (first proved in \cite{Sk56}), the joint process ($\bar{Q}^n(t)$, $\mu I^n(t)$) has a unique reflection mapping representation in terms of $\bar{X}^n(t)$ as \(  (\Phi(\bar{X}^n), \Psi(\bar{X}^n)). \)

Note that by definition of $B^n(t) \leq t$ and from Proposition  \ref{prop:fluid-1}, it follows that
\(
\bigg ( \frac{S^n \circ B^n}{n} - \mu B^n \bigg )
\stackrel{a.s.}{\longrightarrow} 0~ \text{ in } (\sD_{\lim},J_1).
\)
Using this and Proposition  \ref{prop:fluid-1} it follows that
\(
\bar{X}^n \stackrel{a.s.}{\longrightarrow} \bar{X}~ \text{ in } (\sD_{\lim},J_1),
\)
where $\bar{X} := (F(t) - \mu t \mathbf{1}_{\{t  \geq 0\}})$. Using the limit derived above and the continuous mapping theorem, it follows that
\[
(\bar{Q}^n, \mu I^n) = (\Phi(\bar{X}^n), \Psi(\bar{X}^n)) \stackrel{a.s.}{\longrightarrow} (\Phi(\bar{X}),
\Psi(\bar{X}))~\text{ in } (\sD_{\lim} \times \sD_{\lim}, WJ_1).
\]
\EndProof
\noindent \textbf{Remarks.} 1. $\bar{X}$ is the difference between the fluid limits of the arrival and service processes, and is often referred to as the fluid limit of the \textit{netput} process. \vspace{3 pt}

\noindent 2. Theorem \ref{thm:queue-length-fluid} shows that the fluid limit of the queue length process is
\begin{eqnarray*}
\bar{Q}(t) \,=\, (F(t) - \mu t \mathbf{1}_{\{ t \geq 0\}})
+ \sup_{-T_0 \leq s \leq t} (-(F(s) - \mu s \mathbf{1}_{\{s \geq
  0\}}))_{+}, ~ \forall t \in [-T_0,\infty).
\end{eqnarray*}
$\bar{Q}$ can be interpreted as the sum of the fluid netput process and the amount of fluid service capacity lost from the system. As it will be seen below, the time instants where the regulator term $\sup_{-T_0 \leq s \leq t} (-(F(s) - \mu s \mathbf{1}_{\{s \geq
  0\}}))_{+}$ increases are precisely where the queue idles. \vspace{3 pt}

\noindent 3. Figure \ref{fig:fluid} depicts an example queue length process in the fluid limit, and its dependence on the arrival distribution $F$ and service rate $\m$. Note that the process switches between being positive and zero, during the time the server operates. We will investigate this behavior in detail in Section \ref{sec:paths}. Without formally defining the terms, intuitively it should be clear that on [$-T_0,\t_0$) and $[\t_2,\t_3)$ the queue is 'overloaded', while on the intervals $[\t_0,\t_1)$ and $[\t_3\infty)$ it is `underloaded'. \vspace{3 pt}


\begin{figure}[t]
\centering
\includegraphics[scale=0.5]{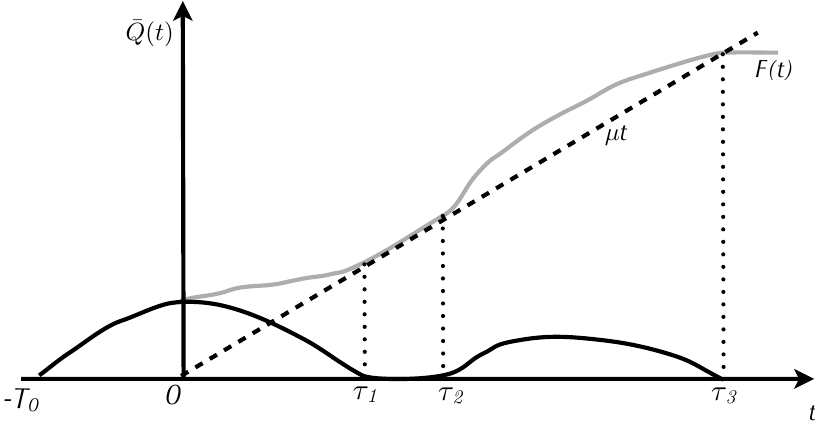}
\caption{An example of a $\D_{(i)}/GI/1$ queue that will undergo multiple
  ``regime changes''. The fluid queue length process is positive on $[-T_0,\tau_0)$ and $[\tau_2,\tau_3)$, and $0$ on
$[\tau_0,\tau_2)$ and $[\tau_3,\infty)$.}
\label{fig:fluid}
\end{figure}

Next, consider the busy time process. It is interesting to observe that $B^n$ does not converge to the identity process, in contrast to the conventional heavy-traffic approximation setting.

\begin{corollary}
\label{cor:busy-time-fluid}
~As $n \rightarrow \infty$,
\begin{equation}
\label{busy-time-fluid}
B^n \stackrel{a.s.}{\longrightarrow} \bar{B}  \text{ in } (\sD_{\lim},J_1)
\end{equation}
where $\bar{B}(t) \,:=\, t \mathbf{1}_{\{t \geq 0\}} -
\frac{1}{\mu} \Psi(\bar{X}(t))$, $\forall t \in [-T_0,\infty)$.
\end{corollary}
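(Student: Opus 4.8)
The plan is to read the result straight off Theorem~\ref{thm:queue-length-fluid} by exploiting the deterministic identity that links the busy-time and idle-time processes, so that no new estimation is required. First I would recall the definition of the fluid-scaled idle-time process given just below \eqref{X-bar-n}, namely $I^n(t) = t\mathbf{1}_{\{t \geq 0\}} - B^n(t)$, which I would immediately rearrange as
\[
B^n(t) = t\mathbf{1}_{\{t \geq 0\}} - I^n(t), \qquad \forall\, t \in [-T_0,\infty).
\]
This exhibits $B^n$ as a fixed, continuous, deterministic function (the ramp $t\mathbf{1}_{\{t\geq 0\}}$) minus the fluid-scaled idle-time process, so that all of the randomness in $B^n$ is precisely the randomness already controlled in the fluid limit theorem.

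Next I would invoke Theorem~\ref{thm:queue-length-fluid}, whose second component gives $\mu I^n \stackrel{a.s.}{\longrightarrow} \Psi(\bar{X})$ in $(\sD_{\lim},U)$, and hence $I^n \stackrel{a.s.}{\longrightarrow} \tfrac{1}{\mu}\Psi(\bar{X})$ uniformly on compacts, almost surely. Since the operation $y \mapsto t\mathbf{1}_{\{t\geq 0\}} - y$ is subtraction of the fixed continuous function $t\mathbf{1}_{\{t\geq 0\}} \in \sD_{\lim}$, it is continuous in the $U$ topology and therefore preserves u.o.c.\ convergence; applying it to the sequence $I^n$ yields
\[
B^n \stackrel{a.s.}{\longrightarrow} t\mathbf{1}_{\{t\geq 0\}} - \tfrac{1}{\mu}\Psi(\bar{X}) = \bar{B} \quad \text{in } (\sD_{\lim},U),
\]
which is exactly \eqref{busy-time-fluid}.

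There is essentially no hard step here: the entire content of the corollary is inherited from the joint convergence of $(\bar{Q}^n, \mu I^n)$ established in the fluid limit theorem, and the only thing to check is the elementary fact that translating by the continuous ramp $t\mathbf{1}_{\{t\geq 0\}}$ does not disturb uniform-on-compacts convergence. If anything, the only point worth flagging is the interpretive remark that the authors themselves make just before the statement: because $\Psi(\bar{X})$ is not identically zero, $\bar{B}$ lies strictly below the identity on precisely those intervals where the limiting queue empties, which is exactly why $B^n$ fails to converge to the identity process, in contrast to the classical $GI/GI/1$ heavy-traffic regime.
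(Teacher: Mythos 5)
Your proof is correct and is essentially identical to the paper's: both rewrite $B^n(t) = t\mathbf{1}_{\{t\geq 0\}} - I^n(t)$ and then apply the convergence $\mu I^n \stackrel{a.s.}{\longrightarrow} \Psi(\bar{X})$ from Theorem~\ref{thm:queue-length-fluid}. You merely spell out the (trivial) continuity of subtracting the fixed ramp function, which the paper leaves implicit.
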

\Proof
By definition, we have $B^n(t) = t \mathbf{1}_{\{t \geq 0\}} -
I^n(t)$. This can be rewritten as
\(
B^n(t) = t \mathbf{1}_{\{t \geq 0\}} - I^n(t).
\)
Using Theorem \ref{thm:queue-length-fluid}, the claim then follows.
\EndProof
Note that $\bar{B}(t) = 0$ for all $t \leq 0$, as $\Psi(\bar{X})(t) = 0$ on that interval.

\section{Diffusion Approximations}\label{sec:diffusion}

In this section we assume $F$ is absolutely continuous in order to establish the desired limit result. As noted before, this is mainly for simplicity of the analysis. 

\subsection{Queue Length Process} \label{sec:queue-length-diffusion}

Define the \textit{diffusion-scaled queue length process} as
\begin{equation}
\frac{Q^n(t)}{\sn} \,:=\, \frac{A^n(t)}{\sn} -
\frac{S^n(B^n(t))}{\sn}, ~~\forall t \in [-T_0,\infty)
\end{equation}
Introducing the terms $\sn \mu t
\mathbf{1}_{\{t \geq 0\}}$, $\sn F(t)$ and $\sn \mu B^n(t)$ we have
\begin{equation*}
\begin{split} \frac{Q^n(t)}{\sn} = \bigg( \frac{A^n(t)}{\sn} - \sn
 F(t) \bigg) - &\bigg(\frac{S^n(B^n(t))}{\sn} - \sn \mu B^n(t) \bigg)\\
  &+ \sn (F(t) - \mu t \mathbf{1}_{\{t \geq 0\}}) + \sn \mu (t \mathbf{1}_{\{ t \geq 0\}} - B^n(t)).
  \end{split}
 \end{equation*}
Recalling the definition of the idle time process $Q^n/\sn$ is
\begin{equation} \label{def:queue-length-diffusion-scaled}
\frac{Q^n}{\sn} = \hat{X}^n + \sn \bar{X} + \sn \mu I^n,
\end{equation}
where
\begin{eqnarray}
\label{X-hat-n}
\hat{X}^n(t) \,&:=&\, \bigg ( \frac{A^n(t)}{\sn} - \sn
F(t) \bigg ) - \bigg (\frac{S^n(B^n(t))}{\sn} - \sn \mu B^n(t)  \bigg )\\
\nonumber
&=&\, \hat{A}^n(t) - \hat{S}^n(B^n(t)), \quad \forall t \in [-T_0,\infty).
\end{eqnarray}

Recall from Theorem \ref{thm:queue-length-fluid} that
$\bar{X}(t) =  (F(t) - \mu t \mathbf{1}_{t \geq 0}) $ is the fluid netput process. Lemma \ref{lem:X-hat} below proves a diffusion approximation to the diffusion-scale refinement $\hat{X}^n(t)$ as an immediate consequence of Proposition \ref{prop:diffusion-1}.
\begin{lemma}
\label{lem:X-hat}
~As $n \rightarrow \infty$,
\begin{equation}
\label{X-hat}
\hat{X}^n \Rightarrow \hat{X} \,:=\, W^0 \circ F - \sigma \mu^{3/2}
W \circ \bar{B}~ \text{ in } (\sD_{\lim},J_1)
\end{equation}
where $\bar{B}$ is defined in \eqref{busy-time-fluid}, and $W^0$ and $W$ are independent standard Brownian bridge and standard Brownian motion respectively.
\end{lemma}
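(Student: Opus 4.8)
The plan is to write $\hat{X}^n$ as the image of the triple $(\hat{A}^n, \hat{S}^n, B^n)$ under a fixed composition-and-subtraction functional, establish the joint convergence of this triple, and then push the limit through by a random time change (continuous mapping) argument. Recall from \eqref{X-hat-n} that $\hat{X}^n = \hat{A}^n - \hat{S}^n \circ B^n$, so it suffices to identify the joint limit of $\hat{A}^n$ and $\hat{S}^n \circ B^n$ and subtract.

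First I would assemble the joint convergence of the inputs. Proposition~\ref{prop:diffusion-1} gives $(\hat{A}^n, \hat{S}^n) \Rightarrow (W^0 \circ F,\ \sigma \mu^{3/2} W \circ e)$ in $(\sD_{\lim}, U)$, with $W^0$ and $W$ independent, while Corollary~\ref{cor:busy-time-fluid} gives $B^n \stackrel{a.s.}{\longrightarrow} \bar{B}$ in $(\sD_{\lim}, U)$. Because the limit $\bar{B}$ is deterministic, almost sure convergence to a constant combines with the weak convergence above to yield the joint statement
\[
(\hat{A}^n, \hat{S}^n, B^n) \;\Rightarrow\; \bigl( W^0 \circ F,\; \sigma \mu^{3/2} W,\; \bar{B} \bigr) \quad \text{in } (\sD_{\lim}, U),
\]
and this holds irrespective of the (strong) dependence between $B^n$ and $(\hat{A}^n, \hat{S}^n)$. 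Since the limit is supported on the separable subspace of continuous functions, I would then invoke the Skorokhod almost sure representation theorem to place versions of the triple on a common probability space on which the convergence holds almost surely, uniformly on compact sets.

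On that space the time change is carried out pathwise. The decisive structural facts are that the outer limit $\sigma \mu^{3/2} W$ has almost surely continuous sample paths and that $\bar{B}$ is continuous and nondecreasing with $0 \le \bar{B}(t) \le t$, so that the arguments $B^n(t)$ stay in a fixed compact interval on every bounded horizon $[-T_0, T]$. I would then use the elementary bound
\[
\bigl| \hat{S}^n(B^n(t)) - \sigma \mu^{3/2} W(\bar{B}(t)) \bigr| \le \sup_{0 \le s \le T} \bigl| \hat{S}^n(s) - \sigma \mu^{3/2} W(s) \bigr| + \sigma \mu^{3/2} \bigl| W(B^n(t)) - W(\bar{B}(t)) \bigr|,
\]
valid for $t \in [-T_0, T]$. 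The first term tends to $0$ by the uniform convergence $\hat{S}^n \to \sigma \mu^{3/2} W$ on $[0,T]$, and the supremum over $t$ of the second term tends to $0$ by the uniform continuity of the continuous path $W$ on $[0,T]$ together with $\sup_{t \le T} |B^n(t) - \bar{B}(t)| \to 0$. This gives $\hat{S}^n \circ B^n \to \sigma \mu^{3/2} W \circ \bar{B}$ uniformly on compacts, and combined with $\hat{A}^n \to W^0 \circ F$ it yields $\hat{X}^n \to W^0 \circ F - \sigma \mu^{3/2} W \circ \bar{B} = \hat{X}$ almost surely on the representation space, hence the asserted weak convergence in $(\sD_{\lim}, U)$.

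The main obstacle is precisely the composition $\hat{S}^n \circ B^n$ with the random, strongly correlated time change $B^n$: one cannot appeal to any independence between $B^n$ and $\hat{S}^n$. The resolution, reflected in the plan above, is that the limiting time change $\bar{B}$ is deterministic (which makes the joint convergence automatic) and the limiting integrand $\sigma \mu^{3/2} W$ is almost surely continuous (which makes the composition map continuous at the limit in the uniform topology). These two features are exactly the hypotheses of the random time change theorem (Theorem~5.3 in \cite{ChYa01}); equivalently, one may package the whole argument as a single application of the continuous mapping theorem (Theorem~5.2 in \cite{ChYa01}) to the functional $(a,s,b) \mapsto a - s \circ b$, whose discontinuity set has probability zero under the law of the limit because $\sigma \mu^{3/2} W$ has continuous paths.
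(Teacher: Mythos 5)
Your proposal is correct and follows essentially the same route as the paper: the paper's proof also combines Proposition~\ref{prop:diffusion-1}, Corollary~\ref{cor:busy-time-fluid}, and the random time change theorem to get $\hat{S}^n\circ B^n \Rightarrow \sigma\mu^{3/2}W\circ\bar{B}$, then subtracts from the arrival limit. Your version simply unpacks the random time change theorem (making explicit the joint convergence via the deterministic limit $\bar{B}$ and the uniform-continuity estimate for $W$), which the paper leaves as a citation.
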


\Proof
First note that $B^n(t) \leq t, \forall t \in [0,\infty)$,  implying that $S^n \circ B^n \in \sD_{\lim}$. Using Proposition  \ref{prop:diffusion-1}, Corollary \ref{cor:busy-time-fluid} and the random time change theorem (see, for example, Section 17 of \cite{Bi68}), it follows that
\(
\sn \bigg ( \frac{S^n \circ B^n}{n} - \mu B^n\bigg ) \Rightarrow
\sigma \mu^{3/2} W \circ \bar{B}.
\)
Now, it follows from Proposition \ref{prop:diffusion-1} that
\(
\hat{X}^n \Rightarrow \hat{X}(t) \,:=\, W^0 \circ F  - \sigma
\mu^{3/2} W \circ \bar{B},
\)
thus concluding the proof.
\EndProof

\noindent \textbf{Remarks.} 1. Note that using a classical time change (see, for example, \cite{KaSh91}) it is possible to see that the Brownian bridge is equal in distribution to a time changed Brownian motion, and $\hat{X}$ is equal in distribution to a stochastic integral
\begin{equation} \label{X-hat-integral}
\hat{X}(t) \stackrel{d}{=} \begin{cases} \int_{-T_0}^t \sqrt{g^{'}(s)} d \tilde{W}_s, \quad &\forall t \in [-T_0, T]\\ -\sigma \mu^{3/2} W(\bar{B}(\t^* \vee T)), &\quad \forall t > \t^* \vee T \end{cases},
\end{equation}
where 
\(
g(t) = F(t)(1-F(t)) + \sigma^2 \mu^3 \bar{B}(t),
 \)
 $\tilde{W}$ is a standard Brownian motion process, $\t^* := \frac{1}{\m}$ and $\vee$ is the $\max$ operator. Thus, the process $\hat{X}$ can also be interpreted as  a time-changed Brownian motion on the interval $[-T_0,T]$, and its sample path is a constant on $(T,\infty)$. \vspace{10 pt}

In the rest of this section, we will use Skorokhod's almost sure representation theorem \cite{Sk56,Wh01b}, and replace the random processes above that converge in distribution by those defined on a common probability space that have the same distribution as the original processes and converge almost surely. The requirements for the almost sure representation are mild; it is sufficient that the underlying topological space is Polish (a separable and complete metric space). We note without proof that the space $\sD_{\lim}$, as defined in this paper, is Polish when endowed with the $M_1$ topology. This conclusion follows from \cite{Wh01}. The authors in \cite{MaMa95} also point out that \cite{Po76} has a more general proof of this fact.

We conclude that we can replace the weak
convergence in \eqref{lim:diffusion-ASV} by
\begin{eqnarray*}
(\hat{A}^n,\hat{S}^n,\hat{V}^n)
\stackrel{a.s.}{\longrightarrow} \bigg( W^0 \circ F, \sigma \mu^{3/2} W,-\sigma \mu^{1/2} W \circ \frac{h}{\mu} \bigg)~ \text{ in } (\sD_{\lim},J_1),
\end{eqnarray*}
where abusing notation we use the same letters as our original processes. Thus, Lemma \ref{lem:X-hat} implies that
\[
\hat{X}^n \stackrel{a.s.}{\longrightarrow} \hat{X}~ \text{ in }
(\sD_{\lim},J_1), ~\text{ as } n \to \infty.
\]

The FCLT to the queue length process relies on the directional derivative of the Skorokhod reflection map  $(\Phi, \Psi)$, defined as 
\begin{equation}
  \sup_{\nabla_t^{\bar{X}}}(-y)(t) = \lim_{n \rightarrow \infty} \Psi(\sn x + y)(t) - \sn \Psi(x)(t),
\end{equation}
pointwise in $\sD_{\lim}$, where $x \in \sC$ and $y \in \sC$, and
\(
\nabla_{t}^{x} = \{-T_0 \leq s \leq t | x(s) = -\Psi(x)(t)\},
\)
is a correspondence of points upto time $t$ where the fluid netput process achieves an infimum. We can now state and prove our main limit theorem. Let $\tilde Y^n := \sn \m I^n - \sn \Psi(\bar X)$.
\begin{theorem} [Diffusion Limit]
\label{thm:queue-length-diffusion}
~The pair $(\hat{Q}^n, \tilde{Y}^n)$ has a
unique representation in terms of $\hat{X}^n$ and $\sn \bar{X}$ given by
\(
\bigg ( \Phi(\hat{X}^n + \sn \bar{X}) - \sn \bar{Q},
\Psi(\hat{X}^n + \sn \bar{X})- \sn \Psi(\bar{X}) \bigg ),
\)
 where $\bar{Q} = \bar{X} + \Psi(\bar{X})$ is the fluid limit of the queue
length process. Furthermore, as $n \rightarrow \infty$
\[
(\hat{Q}^n, \tilde{Y}^n) \Rightarrow (\hat{X} + \tY, \tY)~ \text{
  in } (\sD_{\lim} \times \sD_{\lim},SM_1),
\]
where $\hat{X}(t) =
W^0(F(t)) - \sigma \mu^{3/2} W(\bar{B}(t))$, and $\tY(t) = \max_{s\in
  \nabla_{t}^{\bar{X}}} (-\hat{X}(s))$ $\forall t \in
[-T_0,\infty)$, and $SM_1$ is the \emph{strong $M_1$} topology on the product space $\sD_{\lim} \times \sD_{\lim}$.
\end{theorem}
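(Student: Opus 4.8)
First I would reuse the decomposition \eqref{def:queue-length-diffusion-scaled}, namely $Q^n/\sn = \hat{X}^n + \sn\bar{X} + \sn\mu I^n$, and apply the one-dimensional Skorokhod reflection mapping theorem to the input $\hat{X}^n + \sn\bar{X}$. Exactly as in the proof of Theorem \ref{thm:queue-length-fluid}, one has $Q^n/\sn \geq 0$, the process $\sn\mu I^n$ is nondecreasing with $\sn\mu I^n(-T_0)=0$, and the complementarity relation $\int_{-T_0}^{\infty} (Q^n/\sn)\, d(\sn\mu I^n)=0$ holds. Uniqueness of the reflection decomposition then forces $Q^n/\sn = \Phi(\hat{X}^n+\sn\bar{X})$ and $\sn\mu I^n = \Psi(\hat{X}^n+\sn\bar{X})$. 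Subtracting the fluid centering terms $\sn\bar{Q}$ and $\sn\Psi(\bar{X})$ yields the asserted representation of $(\hat{Q}^n,\tilde{Y}^n)$.

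\textbf{Reduction to a directional derivative.} Since $\Phi=\mathrm{id}+\Psi$ and $\bar{Q}=\bar{X}+\Psi(\bar{X})$, the first coordinate collapses to $\hat{Q}^n = \hat{X}^n + \tilde{Y}^n$, where $\tilde{Y}^n = \Psi(\hat{X}^n+\sn\bar{X})-\sn\Psi(\bar{X})$. Thus the entire limit reduces to identifying the limit of $\tilde{Y}^n$, which is precisely the directional derivative of the reflection regulator $\Psi$ at the continuous point $\bar{X}$ along the (converging) direction $\hat{X}^n$.

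\textbf{Passage to the limit.} Next I would pass to the almost-sure representation already constructed above, so that $\hat{X}^n \stackrel{a.s.}{\longrightarrow} \hat{X}$ in $(\sD_{\lim},U)$, where $\hat{X}=W^0\circ F - \sigma\mu^{3/2}W\circ\bar{B}$ is continuous because $F$ and $\bar{B}$ are. Applying Lemma \ref{lem:mama95} (the directional-derivative reflection mapping lemma) with $\bar{X}\in\sC$ and direction $\hat{X}^n\to\hat{X}$ gives $\tilde{Y}^n \stackrel{a.s.}{\longrightarrow} \tilde{Y}$ in $(\sD_{\lim},M_1)$, with $\tilde{Y}(t)=\max_{s\in\nabla_t^{\bar{X}}}(-\hat{X}(s))$. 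Finally, because $\hat{X}^n\to\hat{X}$ uniformly with continuous limit while $\tilde{Y}^n\to\tilde{Y}$ in $M_1$, the sum $\hat{Q}^n=\hat{X}^n+\tilde{Y}^n$ converges in $(\sD_{\lim},M_1)$ to $\hat{X}+\tilde{Y}$, since addition is continuous at this pair: $\hat{X}$ contributes no discontinuities. Reversing the almost-sure representation returns the claimed weak convergence $(\hat{Q}^n,\tilde{Y}^n)\Rightarrow(\hat{X}+\tilde{Y},\tilde{Y})$.

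\textbf{Main obstacle.} The crux is the directional-derivative step. Two issues demand care. First, the direction is the varying sequence $\hat{X}^n$ rather than the fixed limit $\hat{X}$, so I need the version of Lemma \ref{lem:mama95} that permits a converging perturbation (a Mandelbaum--Massey-type joint continuity of the directional derivative), not merely the pointwise definition recorded above. Second, the minimizer correspondence $\nabla_t^{\bar{X}}$ of the fluid netput can jump as $t$ crosses the regime-change epochs, so $\tilde{Y}$ genuinely has discontinuities; this is exactly why the convergence holds only in the $M_1$ topology and fails in $J_1$, as made precise in Proposition \ref{thm:j1-convergence}. I would also verify carefully that $\hat{X}$ is continuous, since that is what legitimizes the final addition step in $M_1$, where addition is not jointly continuous in general.
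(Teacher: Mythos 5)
Your proposal is correct and follows essentially the same route as the paper's proof: the Skorokhod reflection representation of $(Q^n/\sn,\sn\mu I^n)$, the algebraic reduction to $\hat{Q}^n=\hat{X}^n+\tilde{Y}^n$, the almost-sure representation, and Lemma \ref{lem:mama95} applied with the converging directions $\hat{X}^n\to\hat{X}$. Your two cautionary remarks (that the lemma must accommodate a varying direction $y_n$, and that the final addition in $M_1$ is legitimate only because $\hat{X}$ is continuous and the convergence is uniform) are exactly the points the paper relies on, the first explicitly in the statement of Lemma \ref{lem:mama95} and the second implicitly.
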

\Proof
First, using \eqref{def:queue-length-diffusion-scaled}, it follows by the Skorokhod reflection mapping theorem that
\begin{eqnarray} \label{skorokhod-diffusion}
\bigg (\frac{Q^n}{\sn}, \sn \mu I^n
\bigg ) \,=\, \bigg ( \Phi(\hat{X}^n + \sn \bar{X}), \Psi(\hat{X}^n + \sn
\bar{X}) \bigg ).
\end{eqnarray}
This implies that
\(
\hat{Q}^n = \frac{Q^n}{\sn} -  \sn \bar{Q} =
\Phi(\hat{X}^n + \sn \bar{X}) - \sn \bar{Q}.
\)
Using the fact that $\bar{Q} =
\bar{X} + \Psi(\bar{X})$ and $\Phi(x)= x + \Psi(x)$ for any $x \in \sD_{\lim}$ it follows that
\begin{eqnarray}
\nonumber
\hat{Q}^n &=& \hat{X}^n + \sn \bar{X} + \Psi(\hat{X}^n + \sn
\bar{X}) - \sn (\bar{X} + \Psi(\bar{X})),\\
\label{centered-queue-length}
 &=& \hat{X}^n + \Psi(\hat{X}^n + \sn \bar{X}) - \sn
 \Psi(\bar{X}).
\end{eqnarray}
Next, from the expression for $\sn \mu I^n$ in
\eqref{skorokhod-diffusion} it follows that
\(
\tilde{Y}^n = \Psi(\hat{X}^n + \sn \bar{X}) - \sn \Psi(\bar{X}),
\)
implying that
\(
\hat{Q}^n = \hat{X}^n + \tY^n.
\)
The limit result now follows by use of the following directional derivative reflection mapping lemma which is adapted from Lemma 5.2 in \cite{MaMa95}, and whose proof can be found in the Appendix.

\begin{lemma}[Directional derivative reflection mapping lemma] \label{lem:mama95}
~Let $x$ and $y$ be real-valued continuous functions on $[0,\infty)$, and $\Psi(z)(t) = \sup_{0 \leq s \leq t} (-z(s))$, for any process $z \in \sD_{\lim}$. Let $\{y_n\} \subset \sD_{\lim}$ be a sequence of functions such that $y_n \stackrel{a.s.}{\rightarrow} y$ as $n \rightarrow \infty$. Then, with respect to Skorokhod's $M_1$ topology,
\(
\ty_n := \Psi(\sn x + y_n) - \sn \Psi(x) \longrightarrow \ty := \sup_{s \in
  \nabla_t^x} (-y(s))
\)
as $n \rightarrow \infty$, where $\nabla_t^x = \{0 \leq s \leq t | x(s) = -\Psi(x)(t)\}$.
\end{lemma}

Observe that $\tY_n$ is exactly in the form of $\ty_n$ defined in the lemma above. Lemma \ref{lem:X-hat} and Lemma \ref{lem:mama95} together imply that
\(
\tY_n \stackrel{a.s.}{\longrightarrow} \tY := \max_{s \in
  \nabla_{\cdot}^{\bar{X}}} (-\hat{X}(s))~ \text{ in } (\sD_{\lim},M_1).
\)
It follows that,
\(
\hat{Q}^n = \hat{X}^n + \tilde{Y}^n \stackrel{a.s.}{\longrightarrow} \hat{X} +
\max_{s \in \nabla_{\cdot}^{\bar{X}}} ( -\hat{X}(s) )~ \text{  in } (\sD_{\lim},M_1).
\)

It remains to prove that $\hat Q^n$ and $\tilde Y^n$ converge jointly in the strong $M_1$, or $SM_1$, topology. Notice that the joint process can be written as 
\[
\left(
\begin{array}{c}
\hat Q^n\\
\tilde Y^n
\end{array}
\right) 
=
\left(
\begin{array}{c}
\hat X^n\\
0
\end{array}
\right)
+ \left(
\begin{array}{c}
\Psi(\hat X^n +\sn \bar X) - \sn \Psi(\bar X)\\
\Psi(\hat X^n +\sn \bar X) - \sn \Psi(\bar X)
\end{array}
\right).
\]
The first term on the right hand side converges to 
\[
\mathbf{\hat{X}} := \left(
\begin{array}{c}
\hat X\\
0
\end{array}
\right)
\]
almost surely in $(\sD_{\lim} \times \sD_{\lim},SM_1)$ by Theorem 12.6.1 of \cite{Wh01}, as $\hat X$ is continuous. The second term converges to 
\[
\mathbf{\tilde{Y}} := \left(
\begin{array}{c}
\tilde Y\\
\tilde Y
\end{array}
\right)
\]
almost surely in $(\sD_{\lim} \times \sD_{\lim},SM_1)$. Now, by definition $\mathbf{\hat X}$ is a continuous process and does not share any discontinuity points with $\mathbf{ \tilde Y}$. Therefore, by Corollary 12.7.1 of \cite{Wh01}, the addition operator is continuous, implying that
\[
\left(
\begin{array}{c}
\hat Q^n\\
\tilde Y^n
\end{array}
\right) 
\stackrel{a.s.}{\to}
\left(
\begin{array}{c}
\hat X + \tilde Y\\
\tilde Y
\end{array}
\right)
\]
in $(\sD_{\lim} \times \sD_{\lim}, SM_1)$. Finally, the weak convergence is a direct implication of the almost sure convergence result, thus concluding the proof.
\EndProof

\noindent \textbf{Remarks.}
1. Observe that the diffusion limit to the queue length process is a function of a Brownian bridge and a Brownian motion. This is significantly different from the usual limits obtained in a heavy-traffic or large population approximation to a single server queue. For instance, in the $G/GI/1$ queue, one would expect a reflected Brownian motion in the heavy-traffic setting.  In \cite{MaMa95} it was shown that the diffusion limit process to the $M_t/M_t/1$ queue is a time changed Brownian motion $W(\int \l (s) ds + \int \m (s) ds)$, where $\l(s)$ is the time inhomogeneous rate of arrival process and $\mu(s)$ is that of the service process, reflected through the directional derivative reflection map used in Lemma \ref{lem:mama95}. There are very few examples of heavy-traffic limits involving a diffusion that is a function of a Brownian bridge and a Brownian  motion process. However, there have been some results in other queueing models where a Brownian bridge arises in the limit. In \cite{PuRe10}, for instance, a Brownian bridge limit arises in the study of a many-server queue in the Halfin-Whitt regime. \vspace{3 pt}

\noindent 2. We noted in the remarks after Theorem \ref{thm:queue-length-fluid} that the fluid limit can change between being positive and zero in the arrival interval for a completely general $F$. One can then expect the diffusion limit to change as well, and switch between being a `free' diffusion, a reflected diffusion and a zero process. This is indeed the case. Figure \ref{fig:diffusion} illustrates this for the example in Figure \ref{fig:fluid}. Note that $\forall t \in [-T_0,\t_1)$ $\Psi(\bar{X})(t) = -\bar{X}(-T_0)$, implying that the set $\nabla_t^{\bar{X}}$ is a singleton. On the other hand, at $\t_1$ $\nabla_t^{\bar{X}} = \{-T_0,\t_1\}$. For $t \in (\t_1,\t_2]$, $\Psi(\bar{X})(t) = 0 = \bar{X}(t)$, implying that $\nabla_t^{\bar{X}} = (\t_1,t]$. On $(\t_2,\t_3)$, $\Psi(\bar{X})(t) = 0$, but $\bar{X}(t) > 0$, so that $\nabla_t^{\bar{X}} = (\t_1,\t_2]$. Finally, the fluid queue length becomes zero when the fluid service process exceeds the fluid arrival process in $[\t_3,\infty)$, implying that $\Psi(\bar{X})(t) = -(F(t) - \mu t) > 0$. It can be seen that $\nabla_t^{\bar{X}} = \{t\}$ in this case. \vspace{3 pt}


\begin{figure}[t]
\centering
\includegraphics[scale=0.6]{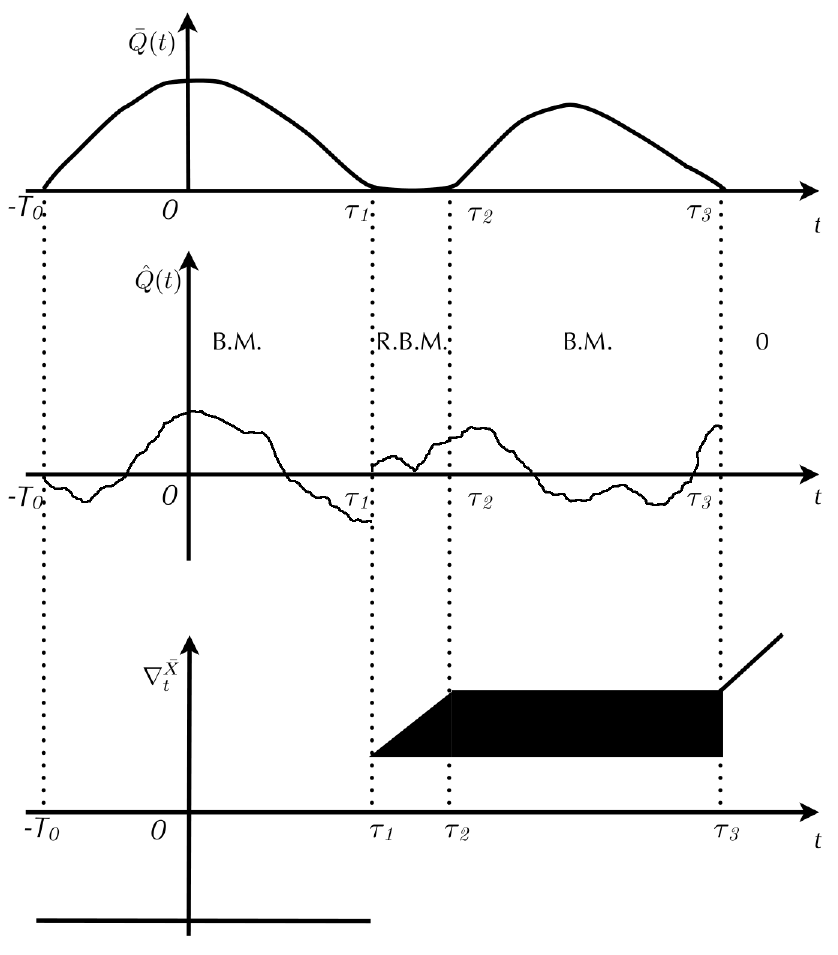}
\caption{An example of a $\D_{(i)}/GI/1$ queue that will undergo multiple
  ``regime changes''. The diffusion limit switches between a free Brownian motion (BM), a reflected Brownian motion (RBM), and the zero process.}
\label{fig:diffusion}
\end{figure}

Recall from Corollary \ref{cor:busy-time-fluid} that $B^n$ converges to a continuous process $\bar{B}$ as $n \to \infty$. Define the diffusion-scaled busy time process as
\begin{equation}
\label{busy-time-diffusion-scale}
\hat{B}^n \,:=\, \sn (\bar{B} - B^n).
\end{equation}
Note that from the definitions of $B^n(t)$ and $\bar{B}(t)$ it follows that
$\hat{B}^n(t) = 0, ~ \forall t < 0$. The diffusion limit for this process is given as follows.

\begin{corollary}
\label{cor:busy-time-diffusion}
The diffusion scaled busy time weakly converges to a regulated diffusion process:
\(
\hat{B}^n \Rightarrow \hat{B} \,:=\, \frac{1}{\mu} \max_{s \in
  \nabla_{\cdot}^{\bar{X}}} (-\hat{X}(s)), ~\text{ in }~ (\sD_{\lim},M_1).
\)
as $n \to \infty$.
\end{corollary}
\Proof
Recall that
\(
B^n(t) = t \mathbf{1}_{\{t \geq 0\}} - I^n(t).
\)
Substituting this and $\bar{B}$ from \eqref{busy-time-fluid} in the definition of $\hat{B}^n$, and rearranging the expression, we obtain
\(
\hat{B}^n = \frac{1}{\mu} \tilde{Y}^n.
\)
A simple application of Theorem \ref{thm:queue-length-diffusion} then provides the necessary conclusion.
\EndProof

Observe that $B^n(t)$ is approximated in distribution by $\hat{B}$ as
\(
B^n(t) \stackrel{d}{\approx} \bar{B}(t) - \frac{1}{\sn} \hat{B}(t),
\)
where $Y \stackrel{d}{\approx} X$ is defined to be
\(
\bbP(Y \leq x) \approx \bbP(X \leq x),
\)
and the approximation is rigorously supported by an appropriate weak convergence result.

The case of uniform $F$ on $[-T_0,T]$ is instructive and it can be seen that on $[-T_0,\t)$ the queue length in the fluid limit is positive. However, as the server starts at time $0$, the only interesting sub-interval of $[-T_0,\t)$ is $[0,\t)$. Using the appropriate definitions, note that $\bar{B}(t) = t$ and $\hat{B}(t) = 0$ for all $t \in [0,\t)$, implying that $B^n(t) = t$ approximately, though in the non-asymptotic regime $B^n(t)$ may be strictly smaller  than $t$. On the other hand, the fluid queue length is zero  in $(\t,\infty)$ and it follows from definition of $\Psi(\bar{X})$ that $\bar{B}(t) = t - \frac{1}{\mu}(-\bar{X}(t)) = \frac{1}{\m} F(t)$ for $t \in (\t,\infty)$. Substituting this expression together with that of $\hat{B}$, and expanding $\hat{X}$, we see that
\[
B^n(t) \stackrel{d}{\approx} t + \frac{1}{\mu} (\bar{X}(t) + \frac{1}{\sn} \hat{X}(t)) \stackrel{d}{=} \frac{1}{\mu} \bigg( F(t) + \frac{1}{\sn} W^0(F(t)) - \sigma \mu W(F(t)) \bigg),
\]
where the second $\stackrel{d}{=}$ is due to the fact that we used the Brownian motion scaling property. Note that this depends on the arrival distribution $F$ alone. In the fluid limit of the busy time process, we see that $\bar{B}(t) = F(t)/\mu$ which is the fraction of time from the interval $[0,t]$ that the queue has spent serving.

\section{Waiting Time and the Sample path Little's Law} \label{sec:workload-diffusion} 
Little's Law is a fundamental tenet of queueing theory, that provides immediate insight into the operation of a queue. While the standard Little's Law relates averages, in this section we prove a large population asymptotic functional relationship that holds on sample paths of the queue length and workload approximations. One may also view this 'sample path Little's Law' as parallel to a snapshot principle in the conventional heavy-traffic setting.

First, the accelerated or fluid-scaled virtual waiting time process is
\(
Z^n(t) = V^n \bigg (n \bigg (\frac{A^n(t)}{n} \bigg ) \bigg ) -\: B^n(t) - t \mathbf{1}_{\{t \leq 0\}}, ~ \forall t \in
 [-T_0,\infty).
\)

\begin{proposition}[Fluid Little's Law]
\label{thm:workload-fluid}
~The fluid-scale workload process is asymptotically related to the queue length fluid limit as $n \to \infty$:
\(
Z^n \stackrel{a.s.}{\longrightarrow} \bar{Z} := \bar{Q}/\mu - e~ \text{ in } (\sD_{\lim},J_1),
\)
where $e : \bbR \to [0,\infty)$ is defined as $e(t) := t \mathbf 1_{\{t \leq 0\}} ~\forall t \in \bbR$ .
\end{proposition}
\Proof
First note that $Z^n(t)$ can be rewritten as
\(
Z^n(t) = V^n \bigg ( n \bigg ( \frac{A^n(t)}{n} \bigg ) \bigg ) - \frac{1}{\mu}
\frac{A^n(t)}{n}
+ \bigg ( \frac{1}{\mu} \frac{A^n(t)}{n} - t
\mathbf{1}_{\{t \leq 0\}} - B^n(t) \bigg ).
\)
Proposition \ref{prop:fluid-1} implies that $\bar{V}^n(t)
\stackrel{a.s.}{\longrightarrow} t/\mu$ in $(\sD_{\lim},J_1)$. Now, using the random time change theorem (Theorem 5.3 in \cite{ChYa01}) and setting $h = A^n/n$ it follows that, as $n \rightarrow \infty$,
\(
\bigg ( V^n \circ A^n - \frac{1}{\mu}\frac{A^n}{n} \bigg )
\stackrel{a.s.}{\longrightarrow} 0~ \text{ in } (\sD_{\lim},J_1).
\)
Using Proposition  \ref{prop:fluid-1} and Corollary \ref{cor:busy-time-fluid}, substituting for $\bar{B}(t)$, we have $\bar{Z}(t) = \frac{1}{\mu} \bar{Q}(t) - t \mathbf{1}_{\{t \leq 0\}}$.
\EndProof

\noindent \textbf{Remarks.} 1. The term $e(t) = t \mathbf{1}_{\{t \leq 0\}}$ accounts for the fact that an arrival at time $t<0$ would require $-t$ time units for service to start.

Now, consider the diffusion-scale virtual waiting time process given by
\(
\hat{Z}^n(t) \,=\, \sn ( Z^n(t) - \bar{Z}(t) ) \quad \forall t \in
[-T_0,\infty).
\)
Proposition \ref{thm:workload-diffusion} below proves a diffusion approximation to $\hat Z^n$ and relates the sample paths of the limit process to that of $\hat Q$.

\begin{proposition}[Diffusion Little's Law]
\label{thm:workload-diffusion}
The diffusion scaled virtual waiting time process satisfies an FCLT in the limit as $n \to \infty$:
\(
\hat{Z}^n \Rightarrow \hat{Z} := \frac{1}{\mu} \hat{Q} + \sigma \mu^{1/2}
W \circ \bar{B} - \sigma \mu^{1/2} W \circ F~ \text{ in } (\sD_{\lim},M_1).
\)
\end{proposition}
\Proof
Expanding the definition of $\hat{Z}^n(t)$ and introducing the term $\frac{1}{\mu}
\frac{A^n(t)}{n}$, we obtain
\(
\hat{Z}^n(t) = \sn \bigg (V^n(A^n(t)) - \frac{1}{\mu}
\frac{A^n(t)}{n} +\frac{1}{\mu} \frac{A^n(t)}{n} -\frac{F(t)}{\mu} + \bar{B}(t) -
B^n(t) \bigg ).
\)
Using the Random Time Change Theorem (Section 17 of
\cite{Bi68}), Proposition  \ref{prop:fluid-1} and Proposition  \ref{prop:diffusion-1}
\begin{eqnarray}
\label{arrival-workload-diffusion}
\sn \bigg ( V^n \circ A^n - \frac{1}{\mu} \frac{A^n}{n} \bigg ) \Rightarrow -\sigma
\mu^{1/2} W \circ \frac{F}{\mu}~ \text{ in } (\sD_{lim},J_1).
\end{eqnarray}
Finally, using this fact, Proposition  \ref{prop:diffusion-1} and Corollary
\ref{cor:busy-time-diffusion}, it follows that
\(
\hat{Z}^n \Rightarrow \hat{Z} =\sigma \mu^{1/2} W \circ \frac{F}{\mu} +
\frac{1}{\mu} W^0 \circ F + \hat{B}~ \text{ in } (\sD_{\lim},M_1).
\)

Note that $W$ and $W^0$ are independent processes. Adding and subtracting the process $\sigma
\mu^{1/2} W \circ \bar{B} $ where $W$ is the Brownian Motion in \eqref{arrival-workload-diffusion}, we obtain
\(
\hat{Z} \,=\, \frac{1}{\mu} \hat{Q} + \bigg( \sigma \mu^{1/2} W \circ \bar{B}
- \sigma \m^{1/2} W \circ \frac{F}{\mu} \bigg).
\)

\EndProof

\noindent \textbf{Remarks.} 1. The limit process in Proposition \ref{thm:workload-diffusion} is equal to
\begin{equation} \label{workload-new}
\hat{Z}(t) = \frac{1}{\m} \hat{Q}(t) - \sigma \m^{1/2} W \bigg (\frac{\bar{Q}(t)}{\m} \bigg ).
\end{equation}
Interestingly, the extra diffusion term is non-zero only when the fluid limit of the queue length process is positive, indicating that it arises from temporal variations in the operating regimes of the queue. To see this, note that the variance of the diffusion term is
\(
\sigma^2 \m~  \bbE \bigg | W(\bar{B}(t)) - W \bigg ( \frac{F(t)}{\m}
\bigg ) \bigg |^2 = \sigma^2 \m
\bigg ( \bar{B}(t) + \frac{F(t)}{\m} - 2 \bar{B}(t) \wedge
\frac{F(t)}{\m} \bigg ),
\)
where $x \wedge y := \min(x,y)$. Clearly, the expression on the right-hand side changes depending upon the ratio of the number of users arrived to the number served in the fluid regime at time $t$. It follows that
\begin{equation*}
\sigma^2 \m~ \bbE \bigg | W(\bar{B}(t)) - W \bigg (\frac{F(t)}{\m}
\bigg ) \bigg |^2  = \begin{cases}
  \sigma^2 \m \bigg ( \frac{F(t)}{\m} - \bar{B}(t) \bigg ), \quad &\frac{F(t)}{\mu
  \bar{B}(t)} > 1 \\
  \sigma^2 \m \bigg ( \bar{B}(t) - \frac{F(t)}{\m} \bigg ), \quad & \frac{F(t)}{\m
  \bar{B}(t)} \leq 1.
\end{cases}
\end{equation*}
\noindent It is easy to see that the first condition above,
\(
F(t)/(\m \bar{B}(t)) > 1,
\)
implies $\bar{Q}(t)/\m > 0$. The second condition,
\(
F(t)/(\m\bar{B}(t)) \leq 1,
\)
implies $\bar{Q}(t) = 0$. This in turn, implies
\(
(F(t) - \mu t \mathbf{1}_{\{t \geq 0\}}) + \Psi(F(t) - \mu t
\mathbf{1}_{\{t \geq 0\}}) = 0.
\)
Rearranging this expression, it follows that $F(t) = \mu t \mathbf{1}_{\{t \geq 0\}} - \Psi(F(t) - \mu t \mathbf{1}_{\{t \geq   0\}})$.

Now, using the definition of $\bar{B}$ from \eqref{busy-time-fluid} we have
\( F(t)/(\m\bar{B}(t)) = 1. \)
It follows that the diffusion term is equal in distribution to the following (time-changed) Brownian Motion
\begin{equation*}
\sigma \mu^{1/2} \bigg ( W(\bar{B}(t)) - W \bigg (\frac{F(t)}{\m}
\bigg ) \bigg )
\stackrel{d}{=} \begin{cases}
\sigma \mu^{1/2} W \bigg (\frac{F(t)}{\m} - \bar{B}(t) \bigg ) = \sigma \m^{1/2}
W \bigg (\frac{\bar{Q}(t)}{\m} \bigg ), \quad &\bar{Q}(t) > 0\\
 \sigma \mu^{1/2} W \bigg (\bar{B}(t) - \frac{F(t)}{\m} \bigg ) = 0, \quad & \bar{Q}(t) = 0.
\end{cases}
\end{equation*}
This  leads to expression \eqref{workload-new}. \vspace{3 pt}

\noindent 2. We note that $\hat Z$ can be interpreted as a sample path Little's Law in the diffusion limit. This result is useful because it provides a sample path relationship between the workload and current queue state. Note that the FCLT of the workload process in a $G/GI/1$ queue (see Chapter 6 of \cite{ChYa01} for details) with arrival rate $\l$ and service rate $\m$ has the form
\(
\tilde{Z}(t) = \frac{1}{\m} \hat{Q}(t) + \sigma \mu^{1/2} (W((\rho \wedge 1) t) - W(\rho t)),
\)
where $\rho = \l/\m$ is the traffic intensity function for the $G/GI/1$ queue, and this is similar to $\hat Z$. The extra diffusion term in \eqref{workload-new} captures the variation of the workload, as the (fluid) queue transitions between various operating states (see Section \ref{sec:paths} for more details on these states). \vspace{3 pt}

\noindent 3. Another interpretation of the term $\sigma \m^{1/2} W(\bar{Q}(t)/\m)$ is that it is in fact the diffusion limit to
the service backlog at time $t$, and the variation in the backlog at
each point in time is captured in the term $\hat{Q}/\m$. Suppose that $f(t) < \mu $ then the fluid queue length process is zero and the server will idle, and the zero state is recurrent for the queue length process. The workload in the system
(for most of the time when $f(t) < \mu $) should be 0. On the other
hand if $F(t) = \mu t$, so that the fluid queue length is zero but the server does not idle, it is reasonable to expect that the virtual waiting time is zero for an arrival at time $t$. However, there is a non-zero
probability of the queue being backlogged at time $t$, and this
fact is captured in the term $\hat{Q}/\mu$. 

\section{Queue Regimes and States} \label{sec:paths}

As noted in Section \ref{sec:diffusion}, the diffusion limit for the
queue length process is piecewise continuous, with discontinuity
points determined by the fluid limit. Indeed, the discontinuity points
are precisely where the fluid limit switches between being
`overloaded' and either `underloaded' and/or `critically-loaded' regimes. We now provide formal definitions of these notions, in terms of the fluid limit arrival and service processes.

We also characterize the sample path of the queue length limit
process, and the points at which it has discontinuities. Developments
in this section follow the study of the directional derivative limit process
in \cite{MaMa95}. However, the limit processes and the setting of our
model are different, as our limit process is a function of a  tied
down Gaussian process while in \cite{MaMa95} the limit process is a
function of a standard Brownian motion. Thus, where necessary, we prove some of
the facts about the sample paths. 

\subsection{Regimes of $\bar{Q}$}\label{sec:regimes}
It is useful to characterize the state of a queue in terms of a
``traffic intensity'' measure. For instance, in the case of a $G/G/1$
queue, the traffic intensity is well defined as the ratio of the arrival rate to the
service rate. This definition is inappropriate for the $\D_{(i)}/GI/1$ queue,
as these systems can be time varying. In \cite{Ma81}, a traffic
intensity function for the $M_t/M_t/1$ queue with arrival rate
$\l(\cdot)$ and service rate $\m(\cdot)$ was introduced as the
continuous function
\[
\rho^*(t) := \sup_{0\leq r \leq t} \frac{\int_r^t \l(u) du}{\int_r^t \m(u) du}, ~t > 0.
\] 
Note that $\rho^*$ follows from the
\textit{pre-limit} model
describing the arrival and service processes in the $M_t/M_t/1$ queue.

For the $\D_{(i)}/GI/1$ queue we define the traffic intensity
in terms of the fluid limit:
\begin{equation} \label{rho}
\rho(t) := \begin{cases}
\infty, \quad & \forall t \in [-T_0,0]\\
\sup_{0 \leq r \leq t}\frac{ F(t) - F(r)}{\mu (t - r)}, \quad &\forall t \in[0,\tilde{T}]\\
0, \quad & \forall t > \tilde{T},
\end{cases}
\end{equation}
where $\tilde{T} := \inf \{t > 0 | F(t) = 1 \text{ and } \bar{Q}(t) =
0\}$. Note that we define the traffic intensity to be $\infty$ in the
interval $[-T_0,0]$ as there is no service, but there can be fluid
arrivals. 
For example, with $F$ uniform
over $[-T_0,T]$, $\rho$ can be shown to be
\[
\rho(t) = \frac{t \wedge T}{t} \frac{1}{\mu(T+T_0)}, ~\forall t \in [0,\tilde{T}].
\]
Note that $\rho$ is continuous in time. Now, consider the following obvious definitions of the operating regimes of the
fluid $\D_{(i)}/GI/1$ queue.

\begin{definition}[Operating regimes.]
~The $\D_{(i)}/GI/1$ queue is (at time $t$)
\begin{enumerate}
\item \textit{overloaded
} if $\rho(t) > 1$.
\item \textit{critically loaded} if $\rho(t) = 1$.
\item \textit{underloaded} if $\rho(t) < 1$.
\end{enumerate}
\end{definition}

The operating regimes can also be referenced in terms of the process
$\bar{Q}$, which in many instances is more intuitive. The following
lemma presents this equivalence.
\begin{lemma} \label{lem:fluid-queue-intensity}
~The $\D_{(i)}/GI/1$ queue is
\begin{enumerate}
\item \emph{overloaded} at time $t$ if $\bar{Q}(t)
  > 0$.
\item \emph{critically loaded} at time $t$ if $\bar{Q}(t) = 0$,
  $\bar{X}(t) = \Psi(\bar{X})(t)$ and there exists an $r < t$ such that
  $\Psi(\bar{X})(t) = \Psi(\bar{X})(s)$ for all $s \in [r,t]$.
\item \emph{underloaded} at time $t$ if $\bar{Q} = 0$, $\bar{X}(t) =
  \Psi(\bar{X})(t)$ and there exists an $r < t$ such that
  $\Psi(\bar{X})(t) > \Psi(\bar{X})(s)$ for all $s \in (r,t)$.
\end{enumerate}
\end{lemma}
The proof of the lemma is in the appendix. Figure \ref{fig:regimes} shows an
example of the various operating regimes with the displayed arrival time
distribution $F$ and service rate $\mu > 1/T$. Here, $BB$ refers to a
Brownian Bridge process and $BM$ refers to a Brownian motion process. Theorem
\ref{thm:queue-length-diffusion} proved a diffusion limit to the
standardized queue length process, and we have shown that
\[
Q^n \stackrel{d}{\approx} L_n \bar{Q} + \sqrt L_n \hat{Q}.
\]
As noted in the remarks
after Theorem \ref{thm:queue-length-diffusion}, the queue length
process switches between being a 'free' diffusion \emph{BB+BM} (when the fluid
limit model is overloaded), to a 'reflected' diffusion \emph{R(BB+BM)} (when the fluid
limit model is critically loaded) and to a 'zero' process \emph{0} (when the
fluid limit model is underloaded).

\begin{figure}[t]
\centering
\includegraphics[scale=0.5]{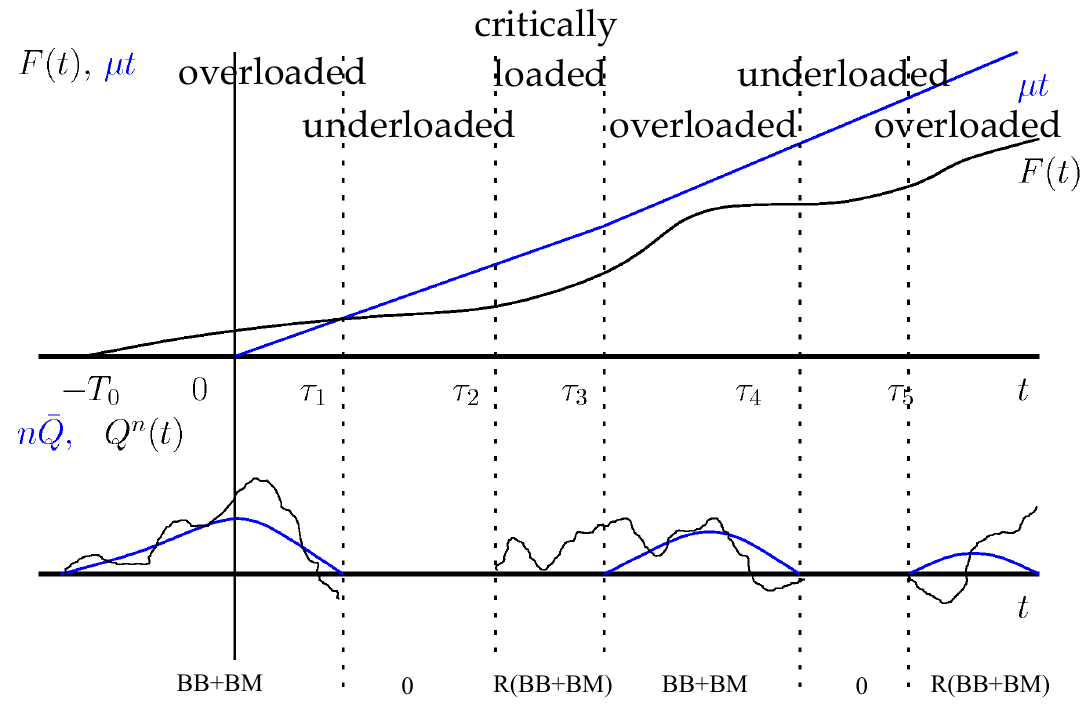}
\caption{An illustration of the various operating regimes of a
  transitory queueing model. Here, we consider the i.i.d. sampling
  $\D_{(i)}$ model.}
\label{fig:regimes}
\end{figure}

Notice that these regimes correspond to those of a
time homogeneous $G/G/1$ queue. However, since the queue length fluid
limit in the $\D_{(i)}/GI/1$ queue can also vary with time we also
identify the following ``finer'' operating states; this is analogous
to the $M_t/M_t/1$ queue as demonstrated in \cite{MaMa95}. In
particular, these states are useful in studying the approximation to
the distribution of queue length process on local time scales. We also
note that Louchard \cite{Lo94} identified some of these operating
regimes in his analysis. The definitions below formalize the intuitive
presentation in \cite{Lo94}.

\begin{definition}[Operating states.] \label{def:granular-states}
~A transitory queue is at
\begin{enumerate}
\item[\emph{(i)}] end of overloading at time $t$ if $\rho(t) = 1$ and there exists an
  open interval $(a,t)$ or $(t,a)$ such that $\rho(r) > 1$ for all $r$ in that interval.
\item[\emph{(ii)}] onset of critical loading at time $t$ if $\rho(t) = 1$ and there
  exists a sequence $\l_n \uparrow t$ such that $\rho(\l_n) < 1$ for all
  $n$.
\item[\emph{(iii)}] end of critical loading at time $t$ if $\rho(t) =1$, and there
  exists a sequence $\l_n \uparrow t$ such that $\rho(\l_n) = 1$ for
  all $n$
  and a sequence $\g_n \downarrow t$ such that $\rho(\g_n) < 1$ for
  all $n$.
\item[\emph{(iv)}] middle of critical loading at time $t$ if $\rho(t) = 1$, and $t$
  is in an open interval $(a,b)$, such that $\sup_{t \in (a,b)}
  \rho(t) \geq 1$ and there exists a sequence $\l_n \uparrow t$ such
  that $\rho(\l_n) = 1$ for all $n$.
\end{enumerate}
\end{definition}

We illustrate how the limit process can be used to approximate the
queue length distribution of the \emph{exact} (pre-limit) model. Our
goal is to study this \emph{distributional approximation} as $\bar{Q}$
and $\hat{Q}$ vary through the various operating regimes and states as
defined above. 

\begin{theorem} [Distributional Approximations] \label{thm:dist-approx}
~The queue length can be approximated in the various operating regimes as follows.\\

\noindent \emph{(i)} Overloaded state.\label{thm:diffusion-overloaded}
Let $t \in (t^*,\t)$ be a time instant of overloading in the
overloaded interval, where $t^* :=
\sup \nabla_t^{\bar{X}}$ and $\t := \inf \{s > t^* | \rho(s) = 1
\}$. Then
\[
\frac{Q^n(t)}{\sn} - \sn (F(t) -
F(t^*) - \m(t - t^*)) \Rightarrow \hat X(t) + X^*, \text{ as } n \to \infty
\]
where $X^* := \sup_{s \in \nabla_{t^*}^{\bar{X}}}
(-\hat{X}(s))$. Further, $\tZ^n_t := \sn (F(t) -
F(t^*) - \m(t - t^*) )+ \hat X(t) + X^*$ is the strong solution to the stochastic differential equation
\(
d \tZ^n_t = \sqrt{n} (f(t) - \m) dt + \sqrt{g^{'}(t)} dW_t ~~\forall t \in (t^*,\t)
\)
with initial condition $\tZ_{t^*} = \hat{X}(t^*) - X^*$, where
and $g(t) = F(t)(1- F(t)) + \sigma^2 \mu^3 \bar{B}(t)$.

\noindent \emph{(ii)} Underloaded state.
If $t$ is a point of underloading, i.e. if $\rho(t) < 1$, then
\(
\frac{Q^n(t)}{\sqrt{n}} \Rightarrow  0,
\)
as $n \to \infty$.

\noindent \emph{(iii)} Middle- and End-of-critically-loaded state.
An open set of the domain $(t^*,\t)$ is a critically loaded interval, where $t^*$ is
a point in the onset of critically loaded state and $\t$ a point at the end of
critically loaded state, as defined in Definition
\ref{def:granular-states}. For any $t \in (t^*,\t)$, let $u = t-t^*$
and we have, as $n \to \infty$,
\[
\frac{Q^n(t)}{\sqrt{n}} \Rightarrow (\hat{X}(t) + \sup_{0 \leq s \leq u} (-
\hat{X}(s))),
 \]
 where $\hat{X}(u) \stackrel{d}{=} \hat{X}(t) - \hat{X}(t^*)$, and
 $\hat{X}(t) \stackrel{d}{=}\int_{-T_0}^t \sqrt{g^{'}(s)} dW_s$.\\

\noindent \emph{(iv)} End of overloading state.
Let $t$ be a point of end of overloading. Then, for all $\t > 0$
\[
\frac{Q^n(t-\frac{\t}{\sqrt{n}})}{\sn} \Rightarrow \left( \hat{X}(t) +
\left( \sup_{s \in \nabla_t^{\bar{X}} \backslash \{t\}} (-\hat{X}(s)) \right) -
(f(t) - \m)\t \right)^+, \text{ as } n \to \infty,
\]
where $f(t)$ is the density function associated with the fluid
limit $F$.
\end{theorem}
The proof is relegated to the appendix.\\

\noindent \textbf{Remarks:} 1. \textit{Overloaded regime.} \noindent
(i) Then the approximate distribution is
Gaussian with mean $F(t) - \mu t$. However, the variance is
affected by the fact that the queue may have idled in the past. Recall
that the variance is $g(t) =
F(t)(1-F(t)) + \sigma^2 \mu^3 \bar{B}(t)$, where from Corollary \ref{cor:busy-time-fluid} 
\[
\bar{B}(t) = \mathbf{1}_{\{t \geq 0\}} - \frac{1}{\m} \Psi(\bar{X})(t).
\]

\noindent (ii) We note that this result is analogous to case 5 of
Section 4 in \cite{Lo94}. However,
in \cite{Lo94}, the author notes that no reflection need be applied in an overloaded
sub-interval, and proceeds to derive the limit process (in this interval
alone) as $W^0 \circ F (t) - \sigma \mu^{3/2} W(t)$. This is not
entirely accurate as the starting state of the process in each new interval of overloading must be factored into the approximation. That is, while
$\nabla_t^{\bar{X}}$ is fixed for all $t$ in an overloaded sub-interval,
the value $\sup_{s \in \nabla_t^{\bar{X}}} (-\hat{X}(s))$ provides
the starting state for the diffusion in such an interval. \vspace{3 pt}

\noindent 2. \textit{Critically-loaded regime.} The queue length process 
in the critically loaded regime is approximated by a driftless reflected
process, with continuous sample paths, with starting state $\hat{X}(t^*)$. By the definition of a critically loaded state $\rho(t) = 1$ at
all such points and $\nabla_t^{\bar{X}}$ ``accumulates'' the
points of critical loading, as $t$ evolves through the critically
loaded interval. It follows that the set $\nabla_t^{\bar{X}}$ is the interval $(t^*,t]$. \vspace{3 pt}

\noindent 3. \textit{End of overloading regime.} As noted in the
definition, a point $t$ is one of end-of-overloading if the traffic
intensity  is 1 at $t$, and is strictly greater than 1 at all points
to the left of it. Here, we are primarily interested in the rate at
which the queue empties out asymptotically as overloading
ends. Consider a sequence of $\t_n$ defined as a sequence of times at
which the queue in the $n$th system first empties out, and define $v := t - \frac{\t_n}{\sn}$. Then, from Theorem \ref{thm:dist-approx}
\[
\t_n = \sn (t - v) \Rightarrow \frac{\hat{X}(t) + \sup_{s \in
    \nabla_t^{\bar{X}}\backslash \{t\}} (-\hat{X}(s))}{ f(t) - \m}
\]
Thus, it can be seen that the time at which the queue empties out
converges to a Gaussian random variable. A similar conclusion was
drawn in \cite{Lo94} and in \cite{MaMa95} for the $M_t/M_t/1$ queue.

\subsection{Sample Paths}\label{sec:sample-paths}

We now characterize a typical sample path of the limit process
$\hat{Q}$.

\begin{proposition} \label{thm:sample-path}
~The process $\hat{Q}$ is upper-semicontinuous almost surely.
\end{proposition}

The following proposition summarizes where discontinuities occur in
$\hat Q$. We note that this is also part of Theorem 3.1 of
\cite{MaMa95}. Since the proof follows that in \cite{MaMa95} we omit it.

\begin{proposition} 
~$\hat Q$ is discontinuous at time $t$, with a nonzero probability, if and only if $t$ is the end-point of overloading or critical loading. The set of such points is nowhere dense.
\end{proposition}

\noindent \textbf{Remarks.} 1. We note that the queue length limit sample paths for the $M_t/M_t/1$ model are also upper-semicontinuous as shown in Theorem 3.1 of \cite{MaMa95}. There the sequence of converging processes was shown to be monotone, which easily leads to upper-semicontinuity by Dini's Theorem. As this monotonicity property does not hold for the corresponding processes in the $\D_{(i)}/GI/1$ model, we argue that the sample path is upper-semicontinuous directly from the characterization of the points of continuity and discontinuity in the domain of the sample path. \vspace{3 pt}

\noindent 2. The intuition for the regime switching behavior proved in
is easy to see in the case of a
uniform arrival distribution with early-bird arrivals, such that the
service rate is greater than the value of the density function function. Here, the (fluid) queue is overloaded on the interval $[-T_0,\t)$ with the singleton set $\nabla_t^{\bar{X}} = \{ -T_0 \}$, and underloaded on the interval $(\t,\infty)$ with the singleton set $\nabla_t^{\bar{X}} = \{t\}$. At $\t$ itself, there are two points in the set $\nabla_t^{\bar{X}} = \{-T_0,\t\}$. Thus, there is a discontinuity due to the fact that the set $\nabla_t^{\bar{X}}$ changes from being a singleton on the interval $[-T_0,\t)$ to $\{-T_0,\t\}$ at $\t$.

 \section{Examples and Simulations} \label{sec:sims}
 We illustrate the queue length process approximations with uniform and exponential arrival time distributions. The former is interesting, as the uniform distribution emerges as the mean field equilibrium arrival profile when arriving users are strategic about when they enter the queue in order to minimize their delay through the queue; see \cite{JaJuSh11,HoJa11}. The exponential distribution case serves to illustrate the fact that many of the conclusions of our theorems can be carried over to infinite support arrival time distributions, though the limit results remain to be fully justified.
\subsection{Uniform Arrival Distribution}
The uniform arrival case is particularly simple and illustrates the discontinuities in the limit processes. Recall that $\nabla_t^{\bar{X}}$ is a correspondence that maps each time $t$ to the set of points (upto $t$) at which the fluid netput process is equal to its infimum at $t$. 

\begin{corollary}
\label{cor:uniform-queue-length-diffusion}
~Let $F$ be the uniform distribution on $[-T_0,T]$, where $-T_0 < 0$. Then,
\[
\hat{Q}(t) =
\begin{cases}
W^0(F(t)) - \sigma \m^{\frac{3}{2}} W(t), \quad &\forall t \in
[-T_0,\t)\\
( W^0(F(\t)) - \sigma \m^{\frac{3}{2}} W(\t)) + ( -(W^0(F(\t)) -
\sigma \m^{\frac{3}{2}} W(\t)))_{+}, \quad &t = \t \\
0, \qquad &\forall t \in (\t,\infty),
\end{cases}
\]
where $\t = \{ -T_0 \leq t < \infty \,|\, F(t) = \m t \}$.
\end{corollary}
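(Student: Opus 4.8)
The plan is to specialize the representation $\hat{Q} = \hat{X} + \tY$ from Theorem~\ref{thm:queue-length-diffusion}, in which $\tY(t) = \max_{s \in \nabla_t^{\bar{X}}}(-\hat{X}(s))$ and $\hat{X}(t) = W^0(F(t)) - \sigma\mu^{3/2}W(\bar{B}(t))$, by computing the fluid netput $\bar{X}$, its reflection $\Psi(\bar{X})$, the busy-time limit $\bar{B}$, and the correspondence $\nabla_t^{\bar{X}}$ explicitly for uniform $F$. First I would record the shape of $\bar{X}(t) = F(t) - \mu t\mathbf{1}_{\{t \geq 0\}}$: for uniform $F$ on $[-T_0,T]$ it is affine increasing on $[-T_0,0]$ and affine decreasing on $[0,\infty)$ (using $F(t)=1$ for $t \geq T$), hence concave with a single peak at $t=0$ as in Figure~\ref{fig:x-ample}. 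Consequently $\bar{X}$ is nonnegative on $[-T_0,\tau]$, vanishes exactly at the two endpoints $-T_0$ and $\tau$, and is strictly negative and strictly decreasing on $(\tau,\infty)$; here $\tau$ is the unique root of $F(t) = \mu t$, whose existence and uniqueness follow from concavity together with $\bar{X}(0) = F(0) > 0$ and $\bar{X}(t) \to -\infty$ as $t\to\infty$.

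Next I would evaluate $\Psi(\bar{X})$, $\bar{B}$ and $\nabla_t^{\bar{X}}$ regime by regime. For $t < \tau$, nonnegativity of $\bar{X}$ on $[-T_0,t]$ gives $\Psi(\bar{X})(t) = \sup_{-T_0 \leq s \leq t}(-\bar{X}(s))_+ = 0$, so Corollary~\ref{cor:busy-time-fluid} yields $\bar{B}(t) = t\mathbf{1}_{\{t \geq 0\}}$, and since $-T_0$ is the only zero of $\bar{X}$ below $\tau$ we get $\nabla_t^{\bar{X}} = \{-T_0\}$. At $t = \tau$ we still have $\Psi(\bar{X})(\tau) = 0$ and $\bar{B}(\tau) = \tau$, but now $\bar{X}$ vanishes at both $-T_0$ and $\tau$, so $\nabla_\tau^{\bar{X}} = \{-T_0,\tau\}$. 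For $t > \tau$, strict monotonic decrease of $\bar{X}$ past $\tau$ forces its infimum over $[-T_0,t]$ to be attained only at $s = t$, giving $\Psi(\bar{X})(t) = -\bar{X}(t)$ and $\nabla_t^{\bar{X}} = \{t\}$.

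Finally I would substitute into $\tY$ and $\hat{Q}$. The key simplification is $\hat{X}(-T_0) = W^0(F(-T_0)) - \sigma\mu^{3/2}W(\bar{B}(-T_0)) = W^0(0) - \sigma\mu^{3/2}W(0) = 0$, so the endpoint $-T_0$ never contributes to the maximum. Thus for $t < \tau$ we obtain $\tY(t) = -\hat{X}(-T_0) = 0$ and, using $\bar{B}(t) = t$ on $[0,\tau)$ (the service term vanishing for $t<0$, so that the displayed $W(t)$ is read as $W(\bar{B}(t))$), $\hat{Q}(t) = \hat{X}(t) = W^0(F(t)) - \sigma\mu^{3/2}W(t)$. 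At $t = \tau$, $\tY(\tau) = \max\{0,\,-\hat{X}(\tau)\} = (-\hat{X}(\tau))_+$ with $\hat{X}(\tau) = W^0(F(\tau)) - \sigma\mu^{3/2}W(\tau)$, producing the stated jump. For $t > \tau$, $\tY(t) = -\hat{X}(t)$, so $\hat{Q}(t) = \hat{X}(t) - \hat{X}(t) = 0$.

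I expect the main obstacle to be the careful analysis of the correspondence $\nabla_t^{\bar{X}}$ across the critical time $\tau$: verifying that on $[-T_0,\tau]$ the netput touches zero only at $-T_0$ and $\tau$, so that the maximum defining $\tY$ is over just these two points, and that strict monotonicity beyond $\tau$ collapses $\nabla_t^{\bar{X}}$ to the singleton $\{t\}$. This is precisely the mechanism that creates the discontinuity of $\hat{Q}$ at $\tau$; once it is pinned down, the remainder is direct substitution into Theorem~\ref{thm:queue-length-diffusion} and Corollary~\ref{cor:busy-time-fluid}.
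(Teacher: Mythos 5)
Your proposal is correct and takes essentially the same route as the paper: it specializes $\hat{Q} = \hat{X} + \tilde{Y}$ from Theorem~\ref{thm:queue-length-diffusion} by computing $\nabla_t^{\bar{X}}$, $\Psi(\bar{X})$ and $\bar{B}$ explicitly for uniform $F$ and substituting, which is exactly what the paper's proof does (the paper merely asserts these computations as ``easy to deduce'' while you carry them out, including the useful observation that $\hat{X}(-T_0)=0$). The one quibble is that your claim of a single peak at $t=0$ implicitly assumes $\mu(T+T_0)>1$ (otherwise the peak of $\bar{X}$ is at $T$ and $\tau = 1/\mu > T$), but the properties you actually use --- concavity, zeros of $\bar{X}$ only at $-T_0$ and $\tau$, strict decrease past $\tau$ --- hold in either case, so the argument is unaffected.
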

\Proof
Recall from Theorem \ref{thm:queue-length-diffusion}
that $\hat{Q} = \hat{X} + \sup_{s \in \nabla_{\cdot}^{\bar{X}}} (
-\hat{X})$ where $\hat{X} = W^0 \circ F - \s
\m^{\frac{3}{2}} W \circ \bar{B}$, and $\bar{B}$ is the fluid busy
time process. Now, using the definition of $\nabla_t^{\bar{X}}$, it is easy to deduce that in this case we have
\[
\nabla_t^{\bar{X}} = \begin{cases}
 \{-T_0\}, \quad &\forall t \in [-T_0,\t),\\
 \{-T_0,\t \}, \quad &t = \t,\\
 \{t\}, \quad \forall &t \in (\t,\infty).
\end{cases}
\]
Further, Corollary \ref{cor:busy-time-fluid} yields
\[
\bar{B}(t) = \begin{cases}
& t, \quad \forall t \in [-T_0,\t],\\
& 0, \quad \forall t \in (\t,\infty).
\end{cases}
\]
Using these facts the conclusion follows by substitution.
\EndProof

The time $\t$ can be interpreted as the first time that the fluid service process catches up
with the fluid arrival process. For a uniform $F$ there is at most one such point, but in general there can be many such points.

\noindent \textbf{Remarks:}
\noindent 1. A useful way to interpret the discontinuity at $\t$ in Corollary \ref{cor:uniform-queue-length-diffusion} is to consider the
process on the two sub-intervals separately and try to ``patch'' them
together. If $\hat{Q}(\t-) = \hat{X}(\t) = \hat{Q}(\t) > 0$ we should
expect a free diffusion path on the interval $[-T_0,\t]$, and a
reflected process such that the path is 0 on $(\t,\infty)$. Furthermore, $\hat{Q}(\t)$
becomes the ``starting state'' for the process on the interval
$(\t,\infty)$, and the reflection operator is applied an instant after
$\t$. On the other hand, if $\hat{Q}(\t-) = \hat{X}(\t-) \leq 0$
we have a free diffusion on $[-T_0,\t)$ and the zero process on
$[\t,\infty)$, i.e., the process drops to zero at $\t$. Thus,
$\hat{Q}(\t-)$ provides the starting conditions for the new ``regime''
of the diffusion, as the process transitions from $[-T_0,\t)$ to
$(\t,\infty)$. \vspace{3 pt}

\noindent 2. We note that in \cite{Lo94}, a diffusion approximation to the queue length process is derived independently for different operating regimes, and as such does not involve the directional derivative reflection map. These limit results have not been ``patched'' together to obtain a ``process-level'' convergence result, which is precisely where the mathematical challenges lie. 
\vspace{3 pt}

Note that the nature of the discontinuity at $\hat{Q}(\t)$ depends on the the sign of $\hat{X}(\t)$. Following \cite{MaMa95} it can be shown $t$ is a point of \emph{right-discontinuity} for a function $x \in \sD_{\lim}$ if $x$ is left-continuous at $t$, and $x(t-) > x(t+)$. On the other hand, $t$ is a point of \emph{left-discontinuity} if $x$ is right-continuous at $t$, and $x(t+) > x(t-)$.

\begin{corollary} \label{cor:uniform-arrivals}
~Let $F$ be the uniform distribution over
$[-T_0,T]$, where $T_0 > 0$, and $\t = \{-T_0\leq t < \infty | F(t) = \mu t
\mathbf{1}_{\{t \geq 0\}} \}$. Then, for the process $\hat{Q}$ in Corollary \ref{cor:uniform-queue-length-diffusion}, we have
\begin{enumerate}
\item[(i)] $[-T_0,\t) \cup (\t,\infty)$ are points of continuity.
\item[(ii)] $\t$ is a point of right-discontinuity, when $\hat{X}(\t)
  \geq 0$.
\item[(iii)] $\t$ is point of left-discontinuity, when $\hat{X}(\t) < 0$.
\end{enumerate}
\end{corollary}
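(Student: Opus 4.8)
The plan is to obtain all three claims by reading off the closed-form expression for $\hat{Q}$ in Corollary~\ref{cor:uniform-queue-length-diffusion} and matching the one-sided limits and the value of $\hat{Q}$ at $\tau$ against the definitions of left- and right-discontinuity stated just before the corollary. The first step is to record that $\hat{X}(t) = W^0(F(t)) - \sigma\mu^{3/2} W(\bar{B}(t))$ is almost surely continuous on $[-T_0,\infty)$, being the composition of the a.s.\ continuous sample paths of the Brownian bridge $W^0$ and the Brownian motion $W$ with the continuous deterministic maps $F$ (absolutely continuous, hence continuous) and $\bar{B}$ (continuous by Corollary~\ref{cor:busy-time-fluid}). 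Part (i) is then immediate: on $[-T_0,\tau)$ the expression in Corollary~\ref{cor:uniform-queue-length-diffusion} gives $\hat{Q} = \hat{X}$, which is continuous, and on $(\tau,\infty)$ we have $\hat{Q} \equiv 0$; since both are open sets on which $\hat{Q}$ agrees locally with a continuous function, every point of $[-T_0,\tau)\cup(\tau,\infty)$ is a continuity point.

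Next I would compute the three relevant quantities at $\tau$. Writing $\xi := \hat{X}(\tau) = W^0(F(\tau)) - \sigma\mu^{3/2}W(\tau)$ (using $\bar{B}(\tau) = \tau$), the left limit is $\hat{Q}(\tau-) = \lim_{t \uparrow \tau}\hat{X}(t) = \xi$ by continuity of $\hat{X}$ together with $\hat{Q}=\hat{X}$ on $[-T_0,\tau)$; the right limit is $\hat{Q}(\tau+) = 0$; and the value is $\hat{Q}(\tau) = \xi + (-\xi)_+$, directly from the middle branch of Corollary~\ref{cor:uniform-queue-length-diffusion}.

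The two cases then follow by splitting on the sign of $\xi$. If $\xi \geq 0$, then $(-\xi)_+ = 0$, so $\hat{Q}(\tau) = \xi = \hat{Q}(\tau-)$, i.e.\ $\hat{Q}$ is left-continuous at $\tau$, while $\hat{Q}(\tau-) = \xi \geq 0 = \hat{Q}(\tau+)$; by the stated definition this makes $\tau$ a point of right-discontinuity, giving (ii). If instead $\xi < 0$, then $(-\xi)_+ = -\xi$, so $\hat{Q}(\tau) = 0 = \hat{Q}(\tau+)$, i.e.\ $\hat{Q}$ is right-continuous at $\tau$, while $\hat{Q}(\tau+) = 0 > \xi = \hat{Q}(\tau-)$, which by definition makes $\tau$ a point of left-discontinuity, giving (iii).

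The calculation itself is routine; the only point deserving care is the boundary case $\xi = 0$ implicit in the non-strict inequality of (ii). Here I would note that $\xi = \hat{X}(\tau)$ is a non-degenerate Gaussian variable, with variance $F(\tau)(1-F(\tau)) + \sigma^2\mu^3\tau > 0$ arising from the independence of $W^0$ and $W$, so that $\mathbb{P}(\xi = 0) = 0$. Consequently $\xi \neq 0$ almost surely, the jump $\hat{Q}(\tau-) - \hat{Q}(\tau+) = \xi$ is genuinely nonzero, and exactly one of (ii)--(iii) occurs on an almost-sure event, so the non-strict $\geq$ in (ii) is harmless. This same observation guarantees that $\tau$ is a true (rather than removable) discontinuity, so there is no real obstacle here beyond careful bookkeeping with the one-sided limits.
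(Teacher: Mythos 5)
Your proof is correct, but it follows a genuinely different route from the paper's. The paper does not compute one-sided limits from the explicit piecewise formula; instead it works with the directional-derivative component $\tilde{Y}(t)=\sup_{s\in\nabla_t^{\bar{X}}}(-\hat{X}(s))$ and invokes the general characterization of continuity, left-discontinuity, and right-discontinuity in Lemma \ref{lem:continuity-conditions} (the consolidation of Lemmas 6.5--6.7 of Mandelbaum--Massey), checking how the correspondence $\nabla_t^{\bar{X}}$ evolves: it is the singleton $\{-T_0\}$ on $[-T_0,\tau)$, equals $\{-T_0,\tau\}$ at $\tau$ (so $\tau$ is isolated in it), and is $\{t\}\subset(\tau,t]$ for $t>\tau$, after which the sign of $\hat{X}(\tau)$ selects which of the two discontinuity conditions of that lemma is realized. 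Your argument instead reads the answer off the closed form in Corollary \ref{cor:uniform-queue-length-diffusion}: it is more elementary and self-contained for the uniform case, at the cost of not exhibiting the structural mechanism (the change in $\nabla_t^{\bar{X}}$) that drives the discontinuity and that the paper reuses in Proposition \ref{thm:sample-path} for general $F$. Two small remarks in your favor: your computation of $\hat{Q}(\tau-)=\hat{X}(\tau)$, $\hat{Q}(\tau)=\hat{X}(\tau)+(-\hat{X}(\tau))_+$, $\hat{Q}(\tau+)=0$ matches exactly what the paper's $\tilde{Y}$-based bookkeeping yields (since $\hat{X}$ is continuous, $\hat{Q}$ and $\tilde{Y}$ have the same discontinuity structure), and your observation that the case $\hat{X}(\tau)=0$ is a null event is a point the paper glosses over --- under the strict-inequality definition of right-discontinuity, statement (ii) is literally false on the event $\{\hat{X}(\tau)=0\}$, and your $\mathbb{P}(\hat{X}(\tau)=0)=0$ remark (valid whenever $0<F(\tau)<1$ or $\sigma>0$) is the right way to dispose of it.
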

The proof is available in the Appendix.



\begin{figure} [t]
\centering
\subfigure[~Sample queue length process mean for    $n=10,25,100,1000$, averaged over 10000 simulation runs.] {
  \includegraphics[scale=0.55,angle=-90]{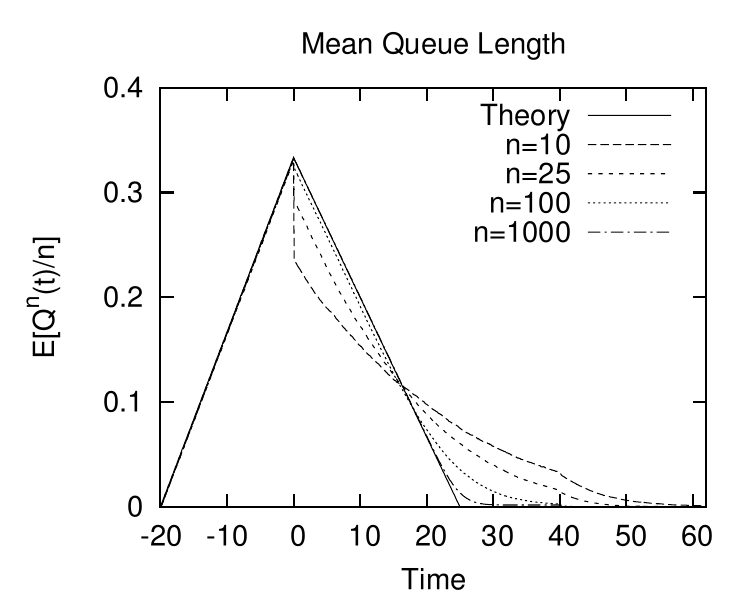}
  \label{fig:mean}
 }
\subfigure[~Sample queue length process variance for $n=10,25,100,1000$, averaged over 10000 simulation runs.] {
  \includegraphics[scale=0.55,angle=-90]{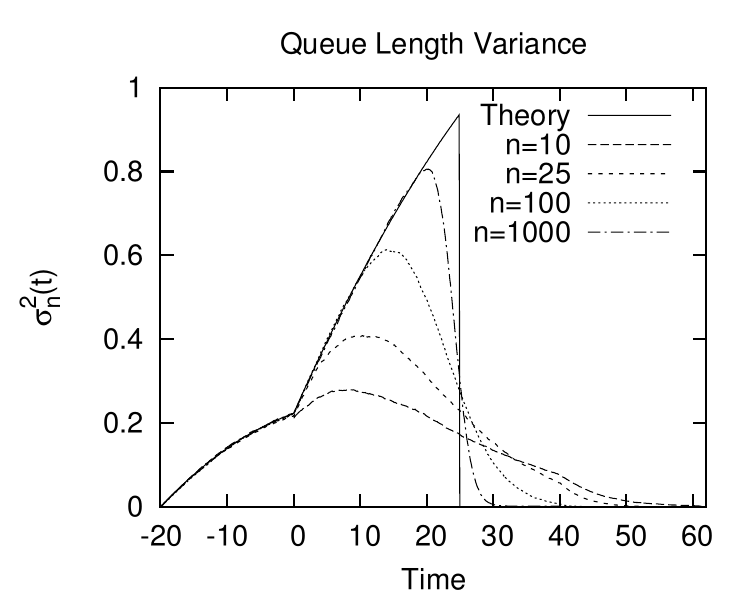}
  \label{fig:variance}
 }
\caption{Typical sample paths, mean and variance envelopes of the queue length process for $F$ uniform over $[-20,40]$, and exponentially distributed service times with rate $\mu = 0.03$.}\end{figure}

Simulations can provide insight into the accuracy of the approximations for various population sizes. Consider a uniform arrival distribution over the interval $[-20,40]$, with service times i.i.d. and exponentially distributed with parameter $\mu = 0.03$. 
Figures \ref{fig:mean} and \ref{fig:variance} show the sample mean and the sample variance of the (scaled) queue length process for $n = 10,25,100,1000$ over 10,000 sample runs. Note that as $n$ increases, the sample mean approaches the fluid limit, and the sample variance approaches the theoretical variance of the queue length process. For the given $F$, the latter quantity is 
\[
\sigma^2(t) \,=\, \begin{cases} F(t)(1-F(t)), \quad &\forall t \in [-T_0,0] \\
F(t)(1-F(t)) + \sigma^2 \mu^3 t, \quad & \forall t \in (0,\tau)\\
0, \quad & \forall t > \tau.
 \end{cases}
\]
Observe from Figure \ref{fig:mean} that even for small $n$, the sample mean is quite close to the fluid limit for $t<0$. However, once queueing dynamics come into play, the fluid limit is a good approximation only for $n=100$ or larger. A similar effect is manifest for the diffusion limit as well: once service starts, and queueing dynamics come into play, the diffusion limit becomes a reasonably good approximation only for $n=1000$ or larger.

\subsection{Exponential Arrival Distribution}

Assume $F$ is an exponential distribution function with parameter $\l > 0$, so that $F(t) = 1 - e^{-\l t}$ and $-T_0 = 0$. Keep in mind that this is unlike the $M/GI/1$ queue where the exponential distribution models the inter-arrival times. Recall that the limit results in Theorems \ref{thm:queue-length-fluid} and \ref{thm:queue-length-diffusion} are proved on compact sets of the domain $[-T_0,\infty)$. Therefore, the limit does not hold simultaneously at all points in the support of $F$, and proving the FSLLN and FCLT for infinite support distributions is beyond the scope of the current paper. However, observe that the queue length fluid model can be conjectured to be
\begin{itemize}
\item[(i)] If $\m \geq \l$, then $\bar Q(t) = 0 ~\forall t \in [0,\infty)$.
\item[(ii)] If $\m < \l$, then
\[
\bar Q(t) = \begin{cases}
(1 - e^{-\l t} - \m t) & \forall t \in [0,\t)\\
0 & \forall t \geq \t,
\end{cases}
\]
\end{itemize}
where $\t := \inf \{t \geq 0 | F(t) = \m t\}$ is the last instant the fluid queue length is positive (also known as the \emph{makespan}). To see this, recall the definition of $\bar Q(t)$ and notice that if $\m \geq \l$ then $\l e^{-\l t} \leq \m, ~\forall t > 0$. This implies that the queue is underloaded, as defined in Section \ref{sec:regimes}.  On the other hand, if $\m < \l$, the system shifts from overload to underload, per our definition in Section \ref{sec:regimes}. It can be shown that $\t = \frac{1}{\l} \mathscr W\left( -\frac{\l}{\m} e^{-\frac{\l}{\m}}\right) + \frac{1}{\m}$, where $\mathscr W(\cdot)$ is the Lambert W-function. To see this, recall that it is the first (strictly positive) solution to $e^{-\l t} = 1 - \m t$. Substituting in $-x = -\l t + \frac{\l}{\m}$, we have $x e^{x} = -\frac{\l}{\m} e^{-\frac{\l}{\m}}$. It is well known that this is the defining equation for the Lambert W function $\mathscr W$, implying that $x = \mathscr W\left( -\frac{\l}{\m} e^{-\frac{\l}{\m}} \right)$. Substituting back for $t$ we obtain the expression for $\t$. 

The fluid model allows us to conjecture the corresponding diffusion refinement. Let $\hat Q$ be the queue length diffusion model. Then,
\begin{itemize}
\item[(i)] If $\m \geq \l$, then $\hat Q(t) = 0 ~\forall t \in [0,\infty)$. 
\item[(ii)] If $\m < \l$, then 
\[
\hat Q(t) = \begin{cases} 
W^0(F(t)) - \sigma \m^{\frac{3}{2}} W(t) & \forall t \in [0,\t)\\
(W^0(F(t)) - \sigma \m^{\frac{3}{2}} W(t)) + (-W^0(F(t)) + \sigma \m^{\frac{3}{2}} W(t))_+ & t = \t\\
0 & \forall t \in (\t,\infty).
\end{cases}
\]
\end{itemize}
The ``proof'' of this is straightforward. Part (i) follows from the fact that the fluid model is underloaded under the same condition. Part (ii) follows from the reasoning in the proof of Corollary \ref{cor:uniform-queue-length-diffusion}. A little algebra shows that the variance cruve of the diffusion approximation $\hat Q$ when $\m < \l$ is given by
\[
\sigma^2(t) = \begin{cases}
F(t) (1-F(t)) + \s^2 \m^3 t & \forall t \in [0,\t),\\
0 & \forall t > \t.
\end{cases}
\]

\begin{figure}[t]
\centering
\subfigure[~Sample mean queue length for $n = 10,25,100,1000$, averaged over 30 simulation runs.] {
  \includegraphics[scale=0.25]{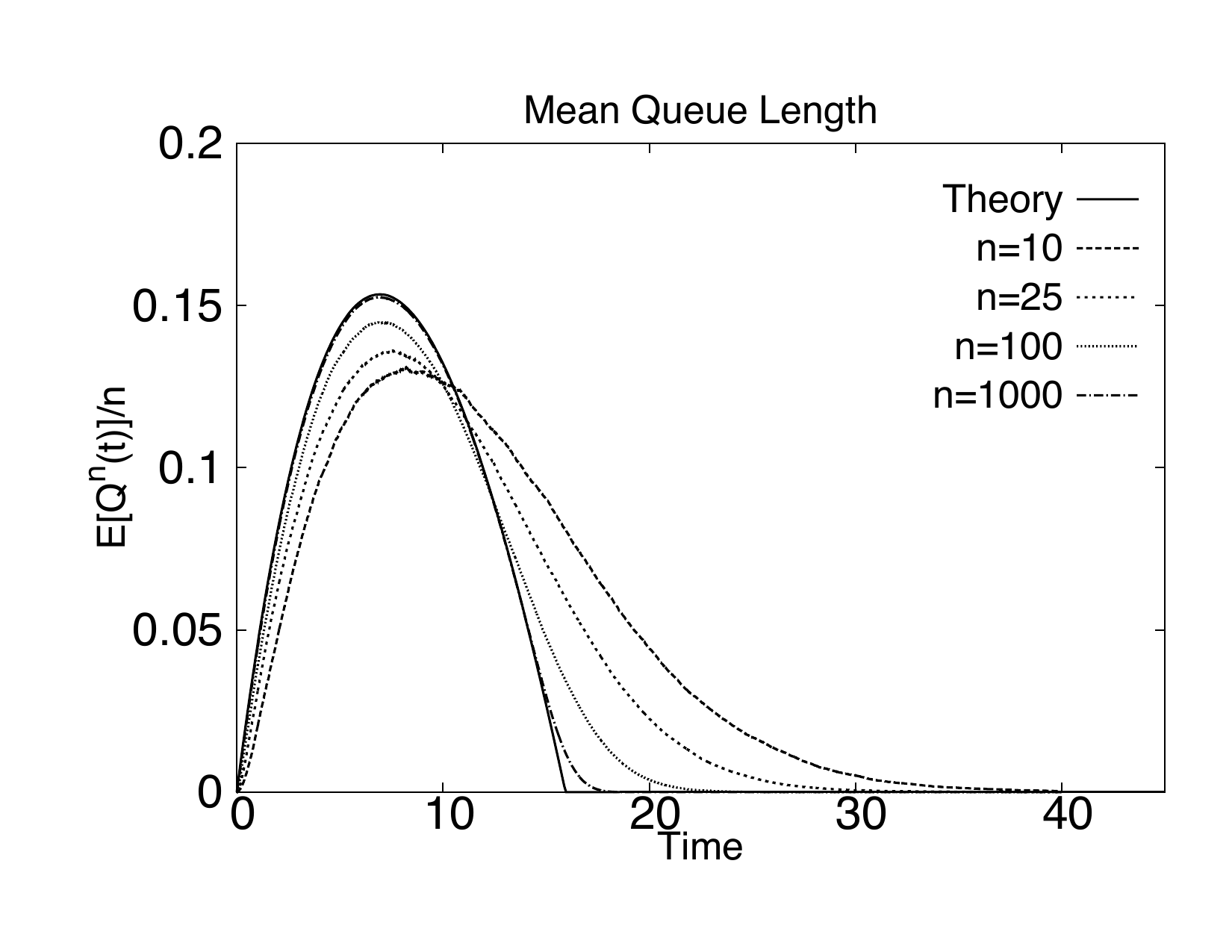}
  \label{fig:exp-dist-b}
}
\subfigure[~Sample variance for $n=10,25,100,1000$, averaged over 30 simulation runs] {
  \includegraphics[scale=0.25]{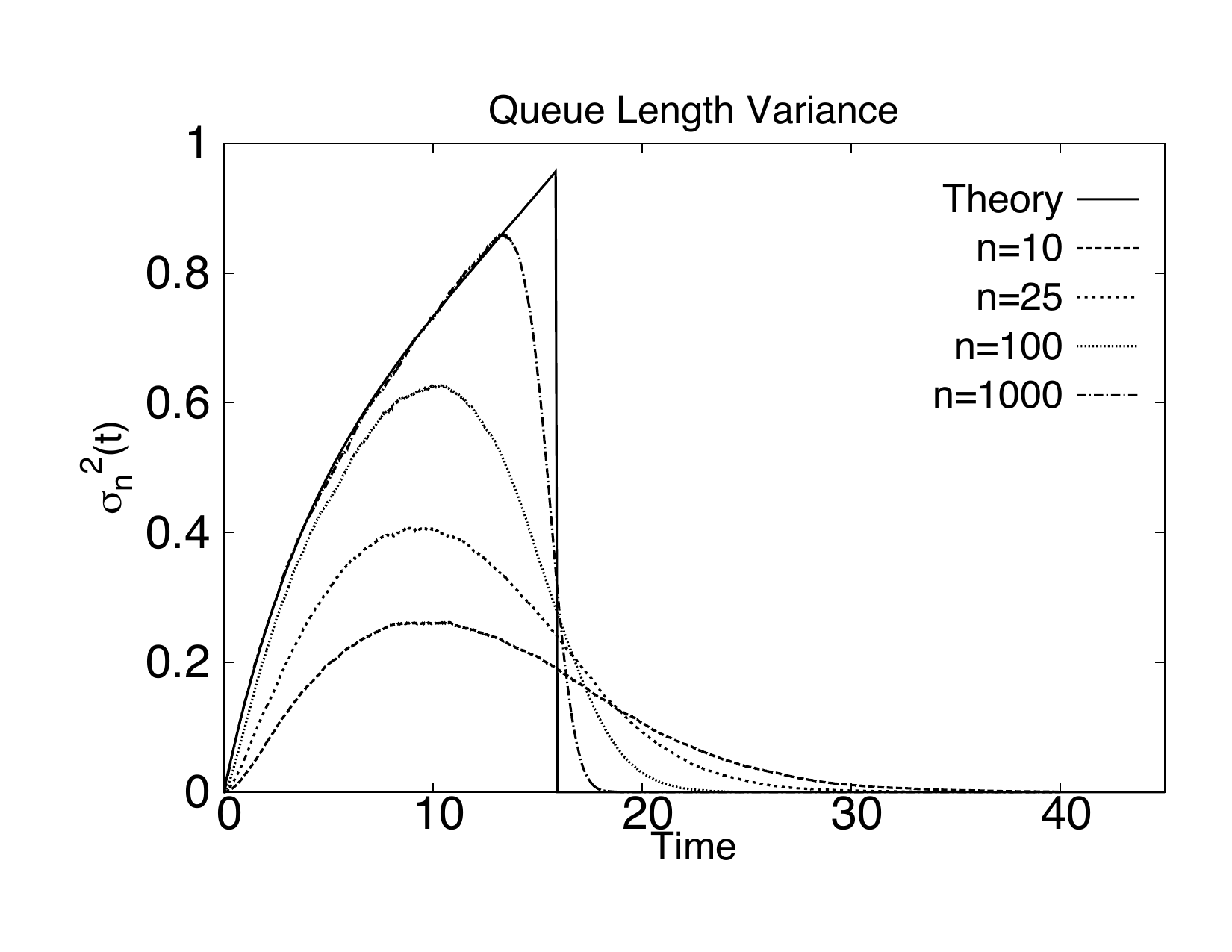}
  \label{fig:exp-dist-c}
}
\caption{Typical sample paths, mean and variance envelopes of the queue length process for $F$ exponential with parameter $\l = 0.1$ and exponentially distributed service times with mean rate $\m = 0.05$.}
\end{figure}

Let us consider a specific example, where $\l = 0.1$ and $\m = 0.05$, in which case it can be verified that $\t = 15.9362$. Figure \ref{fig:exp-dist-b} shows that for even low values of $n$, the fluid limit is a very good approximation to the observed mean queue length. Similarly, Figure \ref{fig:exp-dist-c} shows that the variance of the diffusion limit is a reasonable approximation to the variance of the queue length in the (accelerated) discrete event system. 


We also note a very interesting connection between random graph theory and the $\D_{(i)}/GI/1$ queue, brought to our notice by J.S.H. van Leeuwaarden in a personal communication. Specifically, he has shown that the excursions of the queue length process in the discrete event system, observed at the departure times of jobs, also measures the size of the connected components of a random graph with $n$ vertices. \cite{Al1997} shows that in the ``large graph'' limit (i.e., as $n \to \infty$), the connected components in a (nearly) critical Erd\"{o}s-R\'{e}nyi random graph (see \cite{Du2007} for details on these terms) can be related to the excursions of a Brownian motion on a parabola by a weak convergence limit result linking the two. This type of result is also intimately connected with the question of the final size of an \emph{epidemic} in a critical random graph; see \cite{Ma-Lo1998,VaReLe2010} where the distribution of the final size in a critical Susceptible-Infected-Recovered (SIR) epidemic model is studied. Using a Taylor series expansion on the fluid limit of the queue length, it can be shown that for small $t$ and ignoring terms of order 3 and higher, the diffusion approximation is a Brownian excursion on a parabola. This connection with the $\D_{(i)}/GI/1$ queue might provide a new framework to study the final size distribution of other epidemic models in the critical regime. 

\section{Conclusions and Future Work} \label{sec:conclusion}

In this paper, we introduce a bespoke single server queueing model which we call the $\D_{(i)}/GI/1$ queue, to model systems that are purely transient in nature, and thus serve a finite population of customers. 
We develop pathwise asymptotic fluid and diffusion approximations to the system performance metrics as the population size is increased to infinity. These approximations are unlike the conventional heavy-traffic limits, but are closer in spirit to the uniform acceleration approximations to the $M_t/M_t/1$ queue.

Our original motivation for introducing the $\D_{(i)}/GI/1$ model came from the `concert arrival game', a game of arrival timing introduced in \cite{JaJuSh11}. Customers choose to arrive at a queue to minimize a linear cost functional that depends on the waiting time and the number of people who have already arrived. In the fluid limit, the Nash equilibrium arrival profile was shown to be a uniform distribution function. An important question of interest is whether the equilibrium derived from the fluid model approximates in any way the equilibrium of the finite population `concert arrival game'. 
Our next step is to take the diffusion approximations for the $\D_{(i)}/GI/1$ queue model, and revisit the `concert arrival game' problem. In \cite{JaJuSh11}, the assumption is that the queue lengths are unobservable. Our diffusion approximations can now allow us to study other situations where the queue length are fully or partially observable. In the spirit of mean field game theory, this could be understood to be a `diffusion field game theory'.


An important direction to take this research would be to study transitory queueing models with non-stationary service processes. For instance, customers arriving closer to the end of day may experience shorter service times. We conjecture that the limit results will be interesting but non-trivial to establish.

Finally, it would be interesting to test empirically for how to fit the distribution F, that characterizes the arrival pattern, to data. Then, it would be possible to use the wait time predictions suggested by the $\Delta_{(i)}/GI/1$ model to make capacity sizing recommendations. This would also allow us to compare the performance of the $\Delta_{(i)}/GI/1$ to the more common $GI/GI/1$ model in various application contexts.




\section*{Appendix}



\subsection*{Proof of Lemma~\ref{lem:mama95}.}
  Rewrite $\ty_n$ as
\(
\ty_n = (\Psi(\sn x + y_n) - \Psi(\sn x + y)) - (\Psi(\sn x + y) - \sn \Psi(x)).
\)
Now, using the fact that the Skorokhod reflection map is Lipschitz continuous under the uniform metric (see Lemma 13.4.1 and Theorem 13.4.1 of \cite{Wh01}) we have
\(
(\Psi(\sn x + y_n) - \Psi(\sn x + y)) \leq \| y_n - y \|,
\)
where $\| \cdot \|$ is the uniform metric. It follows that
\(
\ty_n \leq \| y_n - y \| + (\Psi(\sn x + y) - \sn \Psi(x)),
\)
Now, by Theorem 9.5.1 of \cite{Wh01b} we know that as $n \rightarrow \infty$
\[
(\Psi(\sn x + y) - \sn \Psi(x)) \stackrel{a.s.}{\rightarrow} \ty, \text{ in } (\sD_{\lim},M_1).
\]
Using this result, and the fact that by hypothesis $y_n$ converges to $y$ in $(\sD_{lim},J_1)$ we have
\(
\ty_n \stackrel{a.s.}{\rightarrow} \ty, \text{ in } (\sD_{\lim},M_1).
\)
\EndProof

\subsection{Proof of Lemma \ref{lem:fluid-queue-intensity}.}
First, suppose $\bar{Q}(t) > 0$.  It follows that
\(
\bar F(t) -\m t >  \inf_{-T_0 \leq s \leq t} (\bar F(s) - \mu s) = w
\)
where the latter equality follows because the queue starts empty at time $0$, and the fluid netput is positive before time $0$ (Note that we ignore the positive part operator in the definition of
$\Psi$, as the systems starts empty at time $-T_0$). Now, let $t^* = \sup \{0 \leq s \leq t | (\bar F(s) - \mu s) =  \inf_{0 \leq s \leq t} (\bar F(s) - \mu s)\}$ be the point at which the infimum is achieved, on the right hand side. It follows that
\(
\bar F(t) - \m t > \bar F(t^*) - \m t^*,
\)
in turn yielding
\[
\rho(t) = \sup_{0 \leq s \leq t} \frac{\bar F(t) - \bar F(s)}{\m (t-s)} > 1.
\]

Next, suppose $\bar{Q}(t) = 0$,
  $\bar{X}(t) = \Psi(\bar{X})(t)$ and there exists an $r < t$ such that
  $\Psi(\bar{X})(t) = \Psi(\bar{X})(s)$ for all $s \in [r,t]$. It
follows that
\(
\bar F(t) - \m t = - \sup_{-T_0 \leq s \leq t} (-(\bar F(s) - \m s)),
\)
implying there exists a point $r^* \in [0,t]$ such that
\(
\bar F(t) - \m t = \bar F(r^*) - \m r^*.
\)
This, in turn, implies that
\[
\sup_{0 \leq s \leq t} \frac{\bar F(t) - \bar F(s)}{\m (t - r)} \geq \frac{\bar F(t) -
  \bar F(r^*)}{\m (t - r^*)} = 1.
\]
However a simple contradiction argument shows that
\[
\sup_{0 \leq s \leq t} \frac{\bar F(t) - \bar F(s)}{\m (t - r)} > 1
\]
is impossible, implying that
\[
\sup_{0 \leq s \leq t} \frac{\bar F(t) - \bar F(s)}{\m (t - r)} = 1.
\]

Finally, consider case (iii). We have,
$\forall r < t$,
\[
-(\bar F(t) - \m t) = \sup_{-T_0 \leq s \leq t} (-(\bar F(s) - \m s)) > \sup_{-T_0 \leq s \leq r}
(-(\bar F(s) - \m s)).
\]
It follows that
\(
-(\bar F(t) - \m t) > -(\bar F(r) - \m r),
\)
implying
\[
1 > \frac{\bar F(t) - \bar F(r)}{\m (t - r)} ~ \forall r \in
[0,t).
\]
\EndProof

\subsection{Proof of Theorem \ref{thm:dist-approx}.} \label{sec:dist-approx-appendix}
\noindent (i) \textbf{Overloaded regime.}\\
\Proof
First, note that $\t$ is the first instant of an
end of overloading phase, and the current overloaded phase ends at
$\t$. In the overloaded state $\bar{Q}(t) > 0$, implying
that $\Psi(\bar{X})(t)$ is a constant. Using the definition of
$\nabla_t^{\bar{X}}$ it follows that $\Psi(\bar{X})(t) =
-\bar{X}(t^*)$, and $\bar{Q}(t) = \bar{X}(t) - \bar{X}(t^*) = (\bar F(t) -
\bar F(t^*) - \m(t - t^*))$. Next,
from Theorem \ref{thm:queue-length-diffusion}, it is obvious that
\(
\frac{Q^n(t)}{\sn} \stackrel{d}{\approx} \tZ^n_t.
\)

Next, from Remark 1 after Lemma \ref{lem:X-hat}, $\hat{X}(t) -
\hat{X}(t^*) = \int_{t^*}^t \sqrt{g^{'}(s)} dW_s$, which can be seen
to be a diffusion process that starts from $0$ at $t^*$. Noting
that $\nabla_t^{\bar{X}}$ does not change on the interval $(t^*,\t)$,
it follows that $X^* = \sup_{s \in \nabla_{t^*}^{\bar X}} \{-\hat{X}(s))$ is a fixed
random variable, and $\tZ^n_t$ has an initial condition $\tZ^n_{t^*} =
\hat{X}(t^*) - X^*$. It is straightforward to see that $\tZ_n^{\cdot}$
is the strong solution to the mentioned SDE, since it is adapted to
the filtration generated by $W$.
\EndProof

\noindent (ii) \textbf{Underloaded regime.}\\
This result is immediate from the definition of the limit processes.\\
\\
\noindent (iii) \textbf{Middle- and End-of critically-loaded state.}\\
\Proof
For any $t \in (t^*,\t)$ we have $\bar{Q}(t) = 0$. From the weak convergence result in Theorem \ref{thm:queue-length-diffusion} we have
\(
Q^n(t) \stackrel{d}{\approx} n \bar{Q}(t) + \sn \hat{Q}(t),
\)
and expanding the definition of $\hat{Q}$ it follows that
\(
Q^n(t) \stackrel{d}{\approx} \sn (\hat{X}(s) + \sup_{s \in \nabla_t^{\bar{X}}} (-\hat{X}(s))).
\)
Using the fact that $\Psi(\bar{X})(t) = w = -\bar{X}(t) ~\forall~ t \in (t^*,\t)$ in a critically loaded regime, it follows that $\nabla_t^{\bar{X}} = (t^*,t]$ for $t \in (t^*,\t)$. Thus, we have
\(
Q^n(t) \stackrel{d}{\approx} \sn(\hat{X}(s) + \sup_{t^* < s \leq t} (-\hat{X}(s))).
\)
Let $u = t - t^*$. Then, after a change of variables we obtain
\(
Q^n(u+t^*) \stackrel{d}{\approx} \sn (\hat{X}(u+t^*) + \sup_{0 \leq s < u} (-\hat{X}(s))).
\)

Since $W^0$ is a Brownian Bridge process, the strong Markov property of Brownian motion (\cite{KaSh91}) implies that $\hat{X}(u+t^*) -\hat{X}(t^*) = \hat{X}(u)$. Substituting this into the expression above we see that we have,
\(
Q^n(u+t^*) = Q^n(u) + \hat{X}(t^*),
\)
where $\hat{X}(t^*)$ is the starting state of the process in the
middle-of-critically loaded state. A simple change of variables will
provide the desired result. A similar argument will hold for the end-of-critical loading state as well.
\EndProof

\noindent (iv) \textbf{End of Overloading state.}\\
\Proof
By definition for any $\t >0$, $t - \frac{\t}{\sn}$ is a point of
overloading. Therefore 
\[
\frac{Q^n(t-\frac{\t}{\sn})}{\sn} = \hat{X}^n(t - \frac{\t}{\sn}) + \sn (F(t -
\frac{\t}{\sn}) - \mu(t - \frac{\t}{\sn}) ) + \Psi(\hat{X}^n + \sn
\bar{X})(t - \frac{\t}{\sn}) - \sn \Psi(\bar X)(t - \frac{\t}{\sn}).
\]
Without loss of generality, we assume that service started when the
queue was in the overloaded state, so that $\Psi(\bar X)(t -
\frac{\t}{\sn}) = 0$. Now, using the fact the derivative $f$ exists, the mean value
theorem implies the existence of a point $\tilde{t} \in [t-\frac{\t}{\sn}, t]$ such that
\(
F(t - \frac{\t}{\sn}) = F(t) - f(\tilde{t})\frac{\t}{\sn}.
\)
Adding and subtracting the term $f(t) \t/\sn$ to the expression above we have
\[
F(t - \frac{\t}{\sn}) = F(t) - f(t) \frac{\t}{\sn} + f(t) \frac{\t}{\sn} - f(\tilde{t})\frac{\t}{\sn}.
\]
Substituting this into the expression for $Q^n$ above, and introducing the term $\hat{X}^n(t)$, we obtain
\[
\begin{split}\frac{Q^n(t-\frac{\t}{\sn})}{\sn} = \hat{X}^n(t &- \frac{\t}{\sn}) - \hat{X}^n(t) +
\hat{X}^n(t)+ \sn (F(t) - \mu t) -(f(t) - \mu) \t\\ + &\Psi(\hat{X}^n + \sn \bar{X})(t - \frac{\t}{\sn}) + (f(t) - f(\tilde{t})) \frac{\t}{\sn}. \end{split}
\]
Now, using Lemma \ref{lem:X-hat} and the continuity of the limit
process we see that $\hat{X}^n(t - \frac{\t}{\sn}) - \hat{X}^n(t)
\Rightarrow 0$. Further, since $f$ is bounded by virtue of being defined on a finite interval we have $\t (f(t) - f(\tilde{t}))/\sn \rightarrow \infty$ as $n \rightarrow \infty$. Next, consider the term
\(
\hat{Z}(t) := \hat{X}^n(t) + \sn(F(t) - \mu t) + \Psi(\hat{X}^n + \sn \bar{X})(t - \frac{t}{\sn}).
\)
Let $\d > 0$ be sufficiently small, so that the following decomposition of the
expression above holds,
\[
\begin{split} \hat{Z}^n(t) = \sup_{-T_0 \leq s < t-\d} (\hat{X}^n(t) &+ \sn(F(t) -
  \mu t) - \hat{X}^n(s) - \sn \bar{X}(s) )\\ &\vee \sup_{t-\d \leq s \leq
    t - \frac{\t}{\sn}} (\hat{X}^n(t) + \sn(F(t) -
  \mu t) - \hat{X}^n(s) - \sn \bar{X}(s)). \end{split}
\]
Let $t^* = \sup \{\nabla_t^{\bar{X}} \backslash \{t\} \}$. Consider
the first term on the RHS above, and call it $\hat{Z}^n_1(t)$. Since the queue is
overloaded before $t$ no points are ``added'' to the correspondence $\nabla_t^{\bar{X}}$, it follows from the definition of an end of
overloading point that $(F(t) - \mu t) = - \Psi(\bar{X})(t) \equiv - \Psi(\bar{X})(t^*+\d)$. This, in turn, provides
\(
\hat{Z}_1^n(t) = \hat{X}^n(t) + \Psi(\hat{X}+\sn \bar{X})(t^*+\d) - \sn \Psi(\bar{X})(t^*+\d).
\)
Using Lemma \ref{lem:mama95}, it follows that $\hat{Z}^n_1(t)
\Rightarrow \hat{X}(t) + \sup_{s \in \nabla_t^{\bar{X}} \backslash \{t\}} (
-\hat{X}(s))$ as $n \rightarrow \infty$, followed by letting $\d \to 0$.
Next, consider the second term
\begin{eqnarray*}
\hat{Z}_2^n(t) &=& \sup_{t-\d \leq s \leq t - \frac{\t}{\sn}} ( \hat{X}^n(t) + \sn(F(t) -
  \mu t) - \hat{X}^n(s) - \sn \bar{X}(s) )\\
&\leq& \sup_{t - \d
  \leq s \leq t - \frac{\t}{\sn}} (\hat{X}^n(t) - \hat{X}^n(s)) +
\sup_{t - \d \leq s \leq t - \frac{\t}{\sn}} \sn (\bar{X}(t) -
\bar{X}(s))\\
&\leq& \sup_{t - \d  \leq s \leq t}  (\hat{X}^n(t) - \hat{X}^n(s)) +
\sup_{t - \d \leq s \leq t - \frac{\t}{\sn}} \sn (\bar{X}(t) -
\bar{X}(s)).
\end{eqnarray*}
For large $n$, as the queue is overloaded at $t-\frac{\t}{\sn}$ it
follows that
\[
\hat{Z}_2^n(t)  \leq \sup_{t-\d \leq s \leq t} (\hat{X}(t) -
\hat{X}(s)) + \sn (\bar{X}(t) - \bar{X}(t - \frac{\t}{\sn})).
\]
Again, by the mean value theorem 
\begin{eqnarray*}
\sn (\bar{X}(t) - \bar{X}(t - \frac{\t}{\sn})) &=& \sn (F(t) - F(t -
\frac{\t}{\sn}) - \m \frac{\t}{\sn})\\
&=& \sn (f(t) - \m) \frac{\t}{\sn} + (f(t) - f(\tilde{t})) \t,
\end{eqnarray*}
where $\tilde{t} \in [t-\frac{\t}{\sn},t]$. Since, $\tilde{t} \to t$
as $n \to \infty$, by the (right) continuity of $f$ it follows that $ f(t) - f(\tilde{t}) \to 0$ as $n \to \infty$. Then it follows by an application of Lemma \ref{lem:X-hat} (and using the Skorokhod's
almost sure representation) that
\(
\overline{\lim}_{n \rightarrow \infty} \hat{Z}^n_2(t) \leq \hat{X}(t) +
\sup_{t - \d \leq s \leq t} (-\hat{X}(s)) + (f(t) - \m) \t.
\)
On the other hand, for a lower bound, using the mean value theorem again, we have
\(
\hat{Z}^n_2(t) \geq \hat{X}^n(t) - \hat{X}^n(t - \frac{\t}{\sn}) +
(f(t) - \m) \t + (f(t) - f(\tilde{t})) \t.
\)
Once again, using the continuity of $f$, the almost sure representation theorem and Lemma \ref{lem:X-hat}, and noting the continuity of the limit process $\hat{X}$, we have
\[
\underline{\lim}_{n \rightarrow \infty} \hat{Z}^n_2(t) \geq (f(t) - \m)
\t ~ a.s.
\]
Now, using the limits derived for $\hat{Z}^n_1 \text{ and
  } \hat{Z}^n_2$ it follows that
\begin{eqnarray*}
\frac{Q^n(t - \frac{\t}{\sn})}{\sn} &\Longrightarrow& -(f(t) - \m) \t +
\sup_{s \in \nabla_t^{{\bar{X}}} \backslash \{t\}} (\hat{X}(t) - \hat{X}(s))\vee
( f(t) - \mu) \t\\
&=& \left( \hat{X}(t) +
\sup_{s \in \nabla_t^{\bar{X}} \backslash \{t\}} (-\hat{X}(s)) - (f(t) - \m)\t \right)^+.
\end{eqnarray*}
\EndProof

\subsection*{Proof of Proposition \ref{thm:sample-path}.}
The proof is a consequence of the following lemma, which consolidates
Lemmas 6.5, 6.6 and 6.7 in \cite{MaMa95}. The lemma characterizes the
points of discontinuity (and continuity) of the process $\tY(t) =
\sup_{s \in \nabla_t^{\bar{X}}}(-\hat{X}(s))$ in relation to the
correspondence $\nabla_t^{\bar{X}}$. We do not prove these conditions,
but direct the interested reader to \cite{MaMa95}.

\begin{lemma} \label{lem:continuity-conditions}
~A point $t \in [-T_0,\infty)$ is characterized as follows.\\
\noindent \emph{(i)} Continuity Conditions.\\
The following are equivalent:
\begin{enumerate}
\item $t$ is a continuity point.
\item $t \in \nabla_t^{\bar{X}} = \{t\}$, \emph{or} $t \not \in
  \nabla_t^{\bar{X}}$, \emph{or} $t \in \nabla_t^{\bar{X}} \not = \{t\}$ and
  $t$ is not isolated in $\nabla_t^{\bar{X}}$ and $\nabla_t^{\bar{X}}
  \subseteq \nabla_u^{\bar{X}} ~ for ~ some~ u > t$.
\end{enumerate}

\noindent \emph{(ii)} Right-discontinuity Conditions.\\
The following are equivalent:
\begin{enumerate}
\item $t$ is a point of right-discontinuity.
\item $t \in \nabla_t^{\bar{X}} \not = \{t\}$ and $\nabla_u^{\bar{X}}
  \subseteq (t,u] ~ \forall ~ u > r$.
\item $\tY(t) = \tY(t-) > \tY(t+) = -\hat{X}(t)$.
\end{enumerate}

\noindent \emph{(iii)} Left-discontinuity Conditions.\\
The following are equivalent:
\begin{enumerate}
\item $t$ is a point of left-discontinuity.
\item $t \in \nabla_t^{\bar{X}} \not = \{t\}$ and $t$ is isolated in
  $\nabla_t^{\bar{X}}$.
\item $\tY(t) = \tY(t+) = -\hat{X}(t) > \tY(t-)$.
\end{enumerate}
\end{lemma}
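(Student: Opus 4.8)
The plan is to reduce every assertion to the behaviour of the continuous process $-\hat{X}$ sampled along the set-valued correspondence $t \mapsto \nabla_t^{\bar{X}}$, adapting the argument of Lemmas 6.5--6.7 in \cite{MaMa95} to the present setting. Since $\tY(t) = \max_{s \in \nabla_t^{\bar{X}}}(-\hat{X}(s))$ with $\hat{X}$ continuous by Lemma \ref{lem:X-hat}, and since each $\nabla_t^{\bar{X}}$ is a closed subset of the compact interval $[-T_0,t]$ (so the maximum is attained), the only possible source of a jump in $\tY$ is a jump in the correspondence. For parts (ii) and (iii) I would close the cycle $2 \Rightarrow 3 \Rightarrow 1 \Rightarrow 2$: the implication $3 \Rightarrow 1$ is immediate from the definitions of one-sided continuity recalled before Corollary \ref{cor:uniform-arrivals}, whereas $2 \Rightarrow 3$ and $1 \Rightarrow 2$ rest on explicit formulas for the one-sided limits $\tY(t\pm)$. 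Part (i) then follows as the complementary case, since (ii) and (iii) exhaust the jump points.

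First I would record the monotone evolution of the correspondence. Writing $m(t) := -\Psi(\bar{X})(t) = \inf_{-T_0 \le s \le t}\bar{X}(s)$, which is continuous and non-increasing because $\bar{X}$ is continuous, we have $\nabla_t^{\bar{X}} = \{s \le t : \bar{X}(s) = m(t)\}$, and exactly three regimes occur: (a) if $\bar{X}$ strictly lowers the running minimum at $t$, then $\nabla_t^{\bar{X}} = \{t\}$ resets to a singleton and, for $u$ slightly larger, $\nabla_u^{\bar{X}} \subseteq (t,u]$; (b) if $\bar{X}(t) = m(t)$ while $m$ is locally constant, then $t$ is appended, $\nabla_t^{\bar{X}} = \nabla_{t-}^{\bar{X}} \cup \{t\}$; (c) if $\bar{X}(t) > m(t)$, the correspondence is frozen, $\nabla_t^{\bar{X}} = \nabla_{t-}^{\bar{X}}$. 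This trichotomy is precisely the operating-state classification of Definition \ref{def:granular-states}, and it determines whether $t$ is isolated in $\nabla_t^{\bar{X}}$ and whether the earlier infimum points are discarded to the right.

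Next I would convert continuity of $\hat{X}$ into the one-sided limit identities $\tY(t-) = \max_{s \in \nabla_{t-}^{\bar{X}}}(-\hat{X}(s))$ and $\tY(t+) = \max_{s \in \nabla_{t+}^{\bar{X}}}(-\hat{X}(s))$, where $\nabla_{t-}^{\bar{X}} := \overline{\bigcup_{u<t}\nabla_u^{\bar{X}}}$ and $\nabla_{t+}^{\bar{X}} := \bigcap_{u>t}\nabla_u^{\bar{X}}$ are the limiting correspondences. The special case used throughout is that when the correspondence collapses to points above $t$ (regime (a) to the right), continuity of $\hat{X}$ forces $\tY(t+) = \lim_{u \downarrow t}\max_{s \in (t,u]}(-\hat{X}(s)) = -\hat{X}(t)$. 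With these identities the three parts follow by matching regimes: in (ii), the hypothesis $t \in \nabla_t^{\bar{X}} \neq \{t\}$ keeps the earlier infimum points in $\nabla_t^{\bar{X}}$ so $\tY$ is left-continuous, while $\nabla_u^{\bar{X}} \subseteq (t,u]$ for $u>t$ discards them and yields $\tY(t+) = -\hat{X}(t)$, giving $\tY(t) = \tY(t-) > \tY(t+) = -\hat{X}(t)$; in (iii), appending $t$ as an isolated point lets $-\hat{X}(t)$ enter the maximum only from the right, giving $\tY(t) = \tY(t+) = -\hat{X}(t) > \tY(t-)$; and in (i) each listed configuration ($\nabla_t^{\bar{X}} = \{t\}$; $t \notin \nabla_t^{\bar{X}}$; or $t$ a non-isolated point with $\nabla_t^{\bar{X}} \subseteq \nabla_u^{\bar{X}}$ for some $u>t$) forces the two one-sided limits to coincide with $\tY(t)$.

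The main obstacle will be the rigorous justification of the one-sided limit identities, that is, interchanging the limit in $u$ with the maximum over the correspondence; this requires upper-hemicontinuity of $t \mapsto \nabla_t^{\bar{X}}$ together with the guarantee that the maximiser survives into the limiting set. The most delicate instance is the non-isolated critical-loading configuration of part (i), where $t$ is a limit point of $\nabla_t^{\bar{X}}$ and $\nabla_t^{\bar{X}} \subseteq \nabla_u^{\bar{X}}$ for some $u>t$: there one must rule out a spurious jump by using joint continuity of $\hat{X}$ and the nestedness of the correspondence to squeeze both $\tY(t-)$ and $\tY(t+)$ to the common value $\tY(t)$. A related subtlety, which I would resolve by invoking the value formulas of statement~3 rather than the geometry of statement~2 alone, is that at a configuration which is simultaneously isolated and followed by a strict drop (as at the point $\tau$ in the uniform example) it is the realised sign of $\hat{X}(t)$ that selects between the right- and left-discontinuity alternatives.
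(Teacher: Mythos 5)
The paper does not actually prove this lemma: it is imported verbatim as a consolidation of Lemmas 6.5--6.7 of \cite{MaMa95} and stated ``for reference,'' so there is no in-paper argument to compare against. Your reconstruction follows what is essentially the Mandelbaum--Massey route (classify the evolution of the correspondence $t \mapsto \nabla_t^{\bar{X}}$ via the running minimum of $\bar{X}$, then read off the one-sided limits of $\tY$ from the continuity of $\hat{X}$), and your closing observation --- that the geometric conditions in parts (ii) and (iii) are not mutually exclusive and that the realised value of $\hat{X}(t)$ selects the alternative, exactly as happens at $\tau$ in the uniform case --- is correct and is precisely how the paper uses the lemma in the proof of Corollary \ref{cor:uniform-arrivals}.

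There is, however, one concrete error in your machinery. The identity $\tY(t-) = \max_{s \in \nabla_{t-}^{\bar{X}}}(-\hat{X}(s))$ with $\nabla_{t-}^{\bar{X}} := \overline{\bigcup_{u<t}\nabla_u^{\bar{X}}}$ is false, because the correspondence is not monotone in $t$: in your own regime (a) (an underloaded stretch, where $\bar{X}$ strictly lowers its running minimum) one has $\nabla_u^{\bar{X}} = \{u\}$ for each $u$, so $\tY(u) = -\hat{X}(u)$ and $\tY(t-) = -\hat{X}(t)$, whereas $\overline{\bigcup_{u<t}\nabla_u^{\bar{X}}}$ contains a whole interval (and possibly the entire earlier history), over which the maximum of $-\hat{X}$ generically exceeds $-\hat{X}(t)$. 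As written, your formula would manufacture a spurious right-discontinuity at every point of an underloaded interval, contradicting part (i). The correct left-limit set is the Kuratowski limit of $\nabla_u^{\bar{X}}$ as $u \uparrow t$ (equivalently, argue regime-by-regime: when $t \in \nabla_t^{\bar{X}} \neq \{t\}$ the running minimum is locally constant to the left, the correspondences are nested there, and the union-closure coincides with the Kuratowski limit, which is the only case where your formula is needed and valid). A second, smaller point: the equivalences $2 \Leftrightarrow 3$ in parts (ii) and (iii) hold only off a null set (the maximum of $-\hat{X}$ over $\nabla_t^{\bar{X}}\setminus\{t\}$ could tie with or dominate $-\hat{X}(t)$, collapsing the strict inequality), which is why the paper's Proposition \ref{thm:sample-path} phrases discontinuity as occurring ``with a non-zero probability''; you gesture at this but should state it as an explicit almost-sure qualification rather than leaving it implicit.
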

A point of right-discontinuity can be seen to be left-continuous,
coupled with an ordering on the right and left limits, such that
$\tY(t-) > \tY(t+)$. Similarly, a point of left-discontinuity is
right-continuous, and the limits are ordered such that $\tY(t+) >
\tY(t-)$. Using these definitions, we proceed to prove the upper-semicontinuity of the limit process.\\

\Proof [Proposition \ref{thm:sample-path}]
By definition, $\hat{X}$ is continuous,
and it suffices to check that a sample path of the component $\tY(t) = \sup_{s \in
  \nabla_t^{\bar{X}}} ( -\hat{X}(s) )$ is upper-semicontinuous. To
see this, consider the pullback of the level set $\tY^{-1}[a,\infty) =
\{t \in [-T_0,\infty) | \tY(t) \geq a \}$. It suffices to check that
this is a closed set; see \cite{Ru06}. Let $\{\t_n\} \subseteq \{t \in [-T_0,\infty) |
\tY(t) \geq a \}$ be a sequence of points such that
\(
\t_n \rightarrow \t
\)
as $n \rightarrow \infty$, where $\t \in [-T_0,\infty)$ is an
arbitrary point in the domain of $\tY$. Thus, if $\e > 0$, then there
exists an $n_0 \in \mathbb{N}$ such that $\forall$ $n \geq n_0$, $\e
\geq \t - \t_n \geq -\e$. If $\t$ is a continuity point, then the
conclusion is obvious. On the other hand, suppose that $\t$ is a
left-discontinuity point. By part (iii) of Lemma \ref{lem:continuity-conditions} it
follows that $\tY(\t-) < \tY(\t+) = \tY(\t)$. By the definition of a
left-discontinuity there exits an interval $[t^*,\t)$, where $t^* =
\sup \nabla_{\t}^{\bar{X}} \backslash \{\t\}$, on which $\tY$ is (locally)
continuous. Fix $\d > 0$, then there exists an $\eta > 0$ such that if
$ \t$\mbox{\tiny $-$}   $- t \geq -\eta$, then $\d \geq \tY(\t-) - \tY(t) \geq
-\d$. If $\e$ is small enough, then there exists $n_0$
such that $\forall ~\, n \geq n_0$, $\t$\mbox{\tiny $-$}  $-\t_n > -\eta$. It follows
that $\d \geq \tY(\t_n) - \tY(\t-) \geq a - \tY(\t-)$, implying that
\(
\tY(\t-) \geq a - \d.
\)
Since $\d$ is arbitrary, it follows that $\tY(\t-) \geq a$, in turn
implying that $\tY(\t) \geq 0$. Thus, $\t \in \tY^{-1}[a,\infty)$.

Next, suppose that $\t$ is a right-discontinuity point. Then, from part (ii)
of Lemma \ref{lem:continuity-conditions} we have $\tY(\t) = \tY(\t-) <
\tY(\t+)$. Furthermore, for any $u > \t$, we have $\nabla_u^{\bar{X}}
  \subseteq (\t,u]$ implying that these are continuity points (by
  part (i) of Lemma \ref{lem:continuity-conditions}). Using an argument similar to
  that for a left-discontinuity, on points to the right of $\t$, it follows that
  $\tY(\t) \geq a$. This implies that the pullback set
  $\tY^{-1}[a,\infty)$ is closed. As $\{\t_n\}$ is an arbitrary
  sequence in $\tY^{-1}[a,\infty)$ it is necessarily true that
  $\tY$ is upper-semicontinuous.
\EndProof

\subsection*{Proof of Corollary \ref{cor:uniform-arrivals}.}
The proof of the corollary depends on Lemma \ref{lem:continuity-conditions} above.\\

\Proof [Corollary \ref{cor:uniform-arrivals}]
Recall that $\hat{Q} = \hat{X} + \tY$, where $\tY(t) = \sup_{s \in \nabla_t^{\bar{X}}} (-\hat{X}(s))$. The proof of (i) follows directly from part (i) of Lemma \ref{lem:continuity-conditions}. Next, recall from the proof of Corollary \ref{cor:uniform-queue-length-diffusion} that $\nabla_{\t}^{\bar{X}} = \{-T_0,\t\}$. Thus, $\t$ is isolated in the set and it follows that part (iii) of Lemma \ref{lem:continuity-conditions} is satisfied. On the other hand, recall that $\nabla_t^{\bar{X}} = \{t\} \subset (\t,t], ~ \forall t > \t$, and $\t$ can also be a point of right-discontinuity, by part (ii) of Lemma \ref{lem:continuity-conditions}. Thus, $\t$ is one or the other depending on the path of $\hat{X}$. If $\hat{X}(\t) < 0$ then $\tY(\t+) = \tY(\t) > \tY(\t-)$ and $\t$ is a point of left-discontinuity. Otherwise, if $\hat{X}(\t) \geq 0$, then s$\tY(\t) = \tY(\t-) = 0 > \tY(\t+)$ and $\t$ is a point of right-discontinuity.
\EndProof

\bibliographystyle{unsrt} 
\bibliography{refs-queueing}
\end{document}